\numberwithin{equation}{section}
\newtheorem{theorem}{Theorem}[section]
\newtheorem{lem} {Lemma}[section]
\newtheorem{example}{Example}[section]
\def\N{{\mathbb N}}
\def\R{{\mathbb R}}
\begin{document}
\title[Maximum principles for the fractional $p(x,\cdot)$-Laplacian]{ Maximum principles and moving planes method for the fractional $p(x,\cdot)$-Laplacian }
\thanks{AMS Subject Classifications:  35J20, 35J60, 35G30, 35J35.}
\date{}

\vspace{ -1\baselineskip}
\begingroup\small
\begin{center}
\author{\small Anouar Bahrouni, Abdelhakim Sahbani, Ariel Salort  }

\address[A. Bahrouni] {Mathematics Department,  Faculty of Sciences, University of Monastir, 5019 Monastir, Tunisia}
\email{{\tt bahrounianouar@yahoo.fr;Anouar.Bahrouni@fsm.rnu.tn}\hfill\break\indent {\it Web page:} {\tt  http://https://www.researchgate.net/profile/Anouar-Bahrouni}}

\address[A. Sahbani] {Faculty of Mathematics and Physics , University of Tunis el Manar, Tunisia}
\email{{\tt abdelhakim.sahbani@gmail.com}\hfill\break\indent {\it Web page:} {\tt  http://https://www.researchgate.net/profile/Abdelhakim-Sahbani}}

\address[A. Salort]{Departamento  de Matem\'atica, FCEN -- Universidad de Buenos Aires, and\hfill\break\indent Instituto de C\'alculo -- CONICET\hfill\break\indent $0+\infty$ building, Ciudad Universitaria (1428), Buenos Aires, Argentina.}
\email{{\tt asalort@dm.uba.ar}\hfill\break\indent {\it Web page:} {\tt  http://mate.dm.uba.ar/$\sim$asalort}}

\end{center}
\endgroup

 \maketitle
 
\begin{abstract}
 In this paper,  we investigate the monotonicity of solutions for a nonlinear equations involving the fractional Laplacian with variable exponent.
We first prove different maximum principles  involving this operator. Then we employ the direct moving planes method to obtain monotonicity of solutions to a nonlinear equations in
which the fractional laplacian with variable exponent is present. Note that, there are no results studying  the monotonicity of solutions for local or nonlocal equations with variables exponent.  Our results are new in this setting and includes a self-contained techniques.
\end{abstract}

\smallskip
\textbf{Keywords}: Maximum principle, Fractional $p(x,\cdot)$-Laplacian,
Moving planes method.

\section{Introduction}
Fractional Sobolev spaces with variable exponent, see for instance \cite{M33}, and fractional Orlicz-Sobolev spaces, see for instance \cite{Cianchi1, FBS}, are two
distinct non-homogeneous extensions of the classical fractional Sobolev spaces, see
\cite{M26} for an introduction.  In particular, fractional Orlicz-Sobolev spaces  have drawn some attention in the very recent years due to the connection with the nonlocal nonlonear nonstandard grow operator given by the fractional $g-$Laplacian, for further information see for instance \cite{ Cianchi1, Cianchi5, bah3, bah4, Sabri, FBS, S20} and reference to these articles.

On the other hand, the study of qualitative properties of the local and nonlocal homogeneous Laplacian is nowadays huge, see
\cite{Cheng,1,2,3,chenpreprint,im,Gidas,4,Li} and references therein.  To the best of our understanding, the exploration of symmetry results in connection with fractional $g-$Laplacian problems within the theory of non-homogeneous fractional Sobolev spaces remains largely uncharted. To date, only one paper has delved into this domain, as evidenced by the work \cite{Molina}.

The literature has remained silent on many crucial properties of solutions to the fractional equations with variable exponents, such us, the symmetry of solutions. This gap persists due to the presence of variable exponents that prevents the use of change of variables technique. The primary objective of this paper is to address this deficiency  and  give some variants of the maximum principle for the fractional
$p(x,y)-$Laplacian, from where we will deduce a
symmetry of solutions in a ball.

More precisely, we are concerned with the existence of symmetry solutions for the following equation
\begin{align}\label{eq11}
\begin{cases}
\vspace{0.1cm}&(-\Delta)^{s}_{p(x,\cdot)}u(x)=u^{q(x)},\, x\in B_1(0), \\
&u(x)=0,\,x\notin B_{1}(0),
\end{cases}
\end{align}
where $B_1=\{x\in \mathbb{R}^{N}, |x|<1\}$, $0<s<1$, $p$ and $q$ are two continuous functions. The fractional $p(x,\cdot)-$Laplacian is a non-local operator defined as
\begin{equation}\label{eq1}
(-\Delta)^{s}_{p(x,\cdot)}u(x)= \text{p.v.}\,\int_{\mathbb{R}^{N}}\frac{|u(x)-u(y)|^{p(x,y)-2}(u(x)-u(y))}{|x-y|^{N+sp(x,y)}}\,dy,
\end{equation}
where $\text{p.v.}$  stands for the Cauchy principal value, $s\in (0,1)$ and $p$ is a suitable continuous function. In
order to guarantee the integral in \eqref{eq1} is well defined, we assume that $u\in C^{1,1}_{loc}(\mathbb{R}^{N}\cap
L_{sp(x,\cdot)}$, where
$$
L_{sp(x,\cdot)}=\left\{u\in L^{1}_{loc} \text{ such that } \int_{\mathbb{R}^{N}}\frac{|u(y)|^{p(x,y)-1}}{1
+|y|^{N+sp(x,y)}}\,dy<\infty \right\}.
$$ 
We will give the details of the
above fact in Lemma \ref{def}.

\medskip

Fractional Sobolev spaces with variable exponent extend the classical fractional Sobolev space. These spaces were firstly introduced by
Kaufmann et al.  in \cite{M33}, where the authors established a compact embedding theorem of these spaces into variable exponent Lebesgue spaces, and also proved an existence result for nonlocal problems involving the fractional
$p(x,\cdot)-$Laplacian.  In \cite{bah2}, Bahrouni and Radulescu obtained
some further qualitative properties of these new spaces. After that,
some studies on this context have been performed by using different
approaches, see \cite{bah1,Winkert,An.Ky,Biswas,R1,R2,M60}. In these last references, the authors established a compact embedding theorems and proved some further qualitative properties of the fractional Sobolev space with variable exponent and the fractional
$p(x,\cdot)-$Laplace operator. 

As mentioned, another potential extension of fractional Sobolev spaces is the fractional Orlicz-Sobolev space, first introduced in \cite{FBS}. This space serves as the natural setting in which to define the fractional $g-$Laplace operator. Following this development, in \cite{Cianchi1,Cianchi5,bah3,bah4,Sabri}, the authors have established foundational results such as embedding theorems and fundamental topological properties. These findings provide the necessary groundwork for the application of variational approaches.

It is worth mentioning that the literature survey on problems involving the above non-homogeneous fractional  Laplacians is almost meagre since it is still a work in progress. Specifically, it is noteworthy that there is currently no paper dedicated to the investigation of maximum principles and  moving planes for the fractional $p(x,\cdot)-$Laplacian, tools which are fundamental to 
derive symmetry, monotonicity, and non-existence of solutions. 

In recent years, there has been extensive research into the maximum principles of classical elliptic problems, as evidenced by works such as \cite{Ber1,Ber2,Gidas,Li}. However, due to the non-local nature of the classical fractional Laplacian $(-\Delta)^s$, studying these operators presents additional challenges. To address these obstacles, Chen et al. introduced a novel approach in \cite{Chenc}, aimed at deriving integral representation formulas. These formulas serve to transform given pseudo-differential equations into equivalent integral equations, facilitating the analysis of these operators. This observation ensures the potential application of the method of moving planes in integral forms, as outlined in \cite{Chenc}, and the method of moving spheres in integral forms, as detailed in \cite{Yan}, for investigating the existence of symmetric solutions to nonlinear equations incorporating nonlocal operators. These methods have been applied successfully to study equations involving nonlocal fractional operators, and a series of fruitful results have been derived in \cite{Cheng,1,2,4,5,6}. 

When the fractional Laplacian is replaced by the fractional $p-$Laplacian, the mentioned methods become ineffective due to the nonlinearity of the operator. To address these challenges,  Chen and  Li \cite{3} introduced several innovative ideas, among which a significant contribution is the introduction of the key boundary estimate lemma. This lemma plays the role of Hopf lemma in the second step of moving planes. 

In the recent article \cite{Molina}, various versions of the maximum principle for the fractional $g-$Laplacian were derived within the context of fractional non-homogeneous Sobolev spaces. By leveraging these abstract results in conjunction with the moving planes method, the authors investigated qualitative properties of solutions, including Liouville type theorems and symmetry results. However, these techniques do not directly apply to equation \eqref{eq11} due to the variable exponent, which inhibits the use of conventional change of variable techniques. This argument plays a crucial role in establishing the maximum principles and their applications.

To overcome the mentioned drawbacks, we develop some specific tools for this setting. The primary objective of this work is to bridge this gap by introducing several formulations of the maximum principle for the fractional Laplacian with variable exponents $(-\Delta)^{s}_{p(x,\cdot)}$, from where we derive  the symmetry of solutions in a ball.  

Our strategy to establish the symmetry of a solution $u$ with respect to a given hyperplane $H$ is outlined as follows. First, we consider the function $w_{\lambda}(x)=u(x_{\lambda})-u(x)$, where $x_{\lambda}$ denotes the reflection of $x$ with respect to $H$. Next, we prove that $w_{\lambda}(x)\leq 0$ in $H$, and so interchanging the roles of $x$
and $x_{\lambda}$, we conclude that $w_{\lambda}(x)=0$ in $H$. This
fact proves that $u(x)$ would be symmetric with respect to the plane
$H$. We would like to emphasize that the main tool used to prove our result is given in the formulations of the maximum principle stated in  Theorems \ref{th1b}, \ref{th2b} and \ref{le1}. 

\medskip

We provide a brief overview of our main result and discuss some of their consequential implications. 

\noindent In Theorem \ref{th1b} we prove a Strong Maximum Principle for smooth positive supersolutions $(-\Delta)^s_{p(x,\cdot)}$ in a bounded domain $\Omega\subset \R^N$: solutions must be nonnegative in $\Omega$, and they must vanish be a.e. when equal zero at some interior point of $\Omega$. Whereas in Theorem \ref{th2b} we provide for a variant of the maximum principle for anti-symmetric functions in a bounded domain.
Finally, in Theorem \ref{le1}  we obtain a boundary estimate which plays the role of the Hopf lemma in order to apply the moving planes method.

\noindent As a consequence of our result, we establish symmetry and monotonicity of bounded positive solutions to the problem in a unit ball. More precisely, in Theorem \ref{th3} we prove that bounded positive sufficiently smooth solutions to
\begin{align}\label{eq12}
\begin{cases}
        \vspace{0.1cm}&(-\Delta)^{s}_{p(x,\cdot)}u(x)=u^{q(x)}, \ \ x\in B_1(0), \\
        &u(x)=0,~~x\notin B_{1}(0),
\end{cases}
\end{align}
must be radially symmetric and monotone decreasing about the origin.

Finally,  we can also establish the symmetry of positive solutions under natural assumptions on the right hand side $f$ concerning the whole space. More precisely, in Theorem \ref{th4} we prove that bounded positive sufficiently smooth solutions to the problem
\begin{align*}
\begin{cases} \vspace{0.05cm}
&(-\Delta)^s_{p(x,\cdot)} u(x)= f(u(x)), \quad x\text{ in } \R^N.\\ \vspace{0.05cm}
&f'(t)\leq 0 \quad \text{for } t\leq 1,\\
&\lim_{|x|\to\infty} u(x) =0.
\end{cases}
\end{align*}
must be  radially symmetric around some point in $\R^N$.

\vspace{0.2cm}
The paper is organized as follows. In Section \ref{sec2}, we give some
technical lemmas which will be useful in the proof of the main
results. In Section \ref{sec3},  we introduce several formulations of the
maximum principle. Finally, in Section \ref{sec4}, we prove that the
solutions of equation \eqref{eq11} are radially, symmetric around the
origin.

\section{Technical Lemmas} \label{sec2}
In this section, we include some useful lemmas that will be used
throughout the paper. Along of the paper $s\in(0,1)$ will denote the fractional order of the operator \eqref{eq1}. We also assume the following assumptions on the exponent function:
\medskip

$\mathrm{(P_1)}$\,\, $p: \mathbb{R}^{2N}\rightarrow (2,+\infty)$ is a continuous
function  with
\begin{equation}\label{m}
1-\frac{1}{\ln(m)}<p^{-}:=\min_{(x,y)\in \mathbb{R}^{2N}}p(x,y)\leq p(x,y)\leq p^{+}:= \max_{(x,y)\in \mathbb{R}^{2N}}p(x,y),
\end{equation}
for some $0<m<1$ and $sp^+<N$, where $s\in (0,1)$.
 
\vspace{0.1cm}
 
$\mathrm{(P_2)}$\,\, There exists a continuous function $Q:
\mathbb{R}_+\rightarrow \mathbb{R_{+}}$ such that
$$
p(x,y)=Q(|x-y|) \ \ \mbox{for all} \ \ (x,y) \in \mathbb{R}^{2N},
$$
$Q$ is nondecreasing, while $t \rightarrow t^{Q(t)}$ is increasing on $\mathbb{R}_+$. 

\medskip
Assumption $\mathrm{(P_2)}$
plays a crucial role to prove the maximum principle (Theorem \ref{th2b}). More precisely,
from condition $\mathrm{(P_2)}$, we give the technical Lemma \ref{le0} which
will be the key of the proof of Theorem \ref{th2b}.
\begin{example}
Examples of functions satisfying $\mathrm{(P_1)}$ and $\mathrm{(P_2)}$ are $p(x,y)=Q(|x-y|)$ are the following.
\begin{itemize}
\item[(i)] For $t\geq 0$ and $0<m<1$,
$$
Q(t)= t\chi_{[0,1]}(t)+\left(\arctan(t)-\frac{\pi}{4}\right)\chi_{[1,\infty)}(t)+1-\frac{1}{\ln(m)}.
$$
\item[(ii)] For $t\geq 0$ and $0<m<1$,
$$
Q(t) = \frac{1}{1+e^{-t}} + \frac{1}{2}-\frac{1}{\ln(m)}.
$$
\end{itemize}
\end{example}

We state next some lemmas that are useful to prove our main results.

\begin{lem}\label{rem1}
Suppose that assumption $\mathrm{(P_1)}$ is satisfied. Then, we
have
$$
0<C_x=\displaystyle \inf_{y\in \mathbb{R}^N,|x-y|>1}\frac{|x-y|^{N+sp(x,y)}}{1+|y|^{N+sp(x,y)}}, \ \ \forall x\in \mathbb{R}^{N}\setminus \{0\}.
$$
\end{lem}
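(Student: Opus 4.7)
\medskip

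\textbf{Proof plan.} Fix $x \in \mathbb{R}^N \setminus \{0\}$. The strategy is to split the admissible domain $\{y \in \mathbb{R}^N : |x-y|>1\}$ into a bounded "near" region (where $|y|$ is comparable to $|x|$) and an unbounded "far" region (where $|y|$ is much larger than $|x|$), and to bound the ratio below on each piece using only elementary inequalities and the uniform bound $p(x,y)\le p^+$ supplied by $\mathrm{(P_1)}$.

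Set $R := 2|x|+2$, so that $R\ge 2\ge 1$. On the near region $\{y : |y|\le R\}$, one has $|y|^{N+sp(x,y)}\le R^{N+sp(x,y)}\le R^{N+sp^+}$ because $R\ge 1$, while the constraint $|x-y|>1$ forces the numerator to satisfy $|x-y|^{N+sp(x,y)}\ge 1$. Hence on this piece the ratio is bounded below by $1/(1+R^{N+sp^+})$, which is a strictly positive constant depending only on $|x|$, $N$, $s$ and $p^+$.

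On the far region $\{y : |y|>R\}$, the triangle inequality combined with $|y|>2|x|$ yields $|x-y|\ge |y|-|x|\ge |y|/2$; in particular $|x-y|>1$ holds automatically. Since moreover $|y|>R\ge 2$, we have $|y|^{N+sp(x,y)}\ge 1$ and therefore $1+|y|^{N+sp(x,y)}\le 2|y|^{N+sp(x,y)}$. Putting these together,
\begin{equation*}
\frac{|x-y|^{N+sp(x,y)}}{1+|y|^{N+sp(x,y)}}
\ge \frac{(|y|/2)^{N+sp(x,y)}}{2\,|y|^{N+sp(x,y)}}
= \frac{1}{2^{N+sp(x,y)+1}} \ge \frac{1}{2^{N+sp^+ +1}}.
\end{equation*}
Taking the minimum of the two lower bounds from the two regions yields a positive constant $C_x$, as desired.

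The whole argument is essentially bookkeeping, so there is no genuine obstacle; the only point to watch is that the exponent $N+sp(x,y)$ appears in both numerator and denominator with the same value of $p(x,y)$, which prevents a naive separation into $p^-$ and $p^+$. This is handled by first factoring out a common power of $|y|$ in the far region, and by reducing to $R^{N+sp^+}$ in the near region after observing $R\ge 1$.
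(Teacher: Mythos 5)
Your proof is correct. The paper omits the proof of this lemma entirely, stating only that ``the proof is simple,'' so there is nothing to compare against; your near/far split at $R=2|x|+2$, combined with the observation that $R\ge 1$ lets you pass from $R^{N+sp(x,y)}$ to $R^{N+sp^+}$ in the near region, and that factoring out the common power $|y|^{N+sp(x,y)}$ in the far region sidesteps the variable exponent, is exactly the kind of elementary bookkeeping the authors had in mind, and every inequality you use is valid since the exponent $N+sp(x,y)$ is always positive.
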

\begin{proof}
The proof is simple and we omit it.
\end{proof}
Now, as a consequence of the above lemma, we prove that the space
$L_{sp(x,\cdot)}\cap C^{1,1}_{loc}(\mathbb{R}^{N})$ is enough for the
fractional $p(x,\cdot)-$Laplacian to be well defined.
\begin{lem}\label{def}
Let $(\mathrm{P_1)}-\mathrm{(P_2)}$ be satisfied. If $u\in
C^{1,1}_{loc}(\mathbb{R}^{N})\cap L_{sp(x,\cdot)}$ at $x\in
\mathbb{R}^{N}$, then $(-\Delta)_{p(x,\cdot)}^{s}u$ is pointwisely defined  for any $x\in \R^N$.
\end{lem}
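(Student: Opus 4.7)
The plan is to fix $x\in \mathbb{R}^N$, change variables $z = y-x$, and split the principal value $I = I_{1}+I_{2}$ into a near part $I_1$ over $\epsilon < |z| < r$ and a far part $I_2$ over $|z|\geq r$, with $r$ small enough that $B_r(x)$ lies in a neighborhood where $u$ is $C^{1,1}$. After the change of variable the integrand reads
\begin{equation*}
\frac{|u(x)-u(x+z)|^{Q(|z|)-2}(u(x)-u(x+z))}{|z|^{N+sQ(|z|)}},
\end{equation*}
making manifest the $z\mapsto-z$ symmetry afforded by $(\mathrm{P_2})$.

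For the far part, the elementary bound $|u(x)-u(x+z)|^{Q(|z|)-1}\leq C_x(1 + |u(x+z)|^{Q(|z|)-1})$ (the constant $C_x$ absorbing $|u(x)|$ raised to a bounded exponent, since $p^-\leq Q\leq p^+$) lets me split $|z|\geq r$ into the bounded annulus $r\leq |z|\leq 1$ and the tail $|z|>1$. On the annulus the kernel is uniformly bounded below and $u$ is locally bounded, giving immediate integrability. On the tail, Lemma~\ref{rem1} yields $|z|^{N+sQ(|z|)}\geq C_x(1+|x+z|^{N+sp(x,x+z)})$, reducing the tail estimate to a constant multiple of $\int_{\mathbb{R}^N}(1+|u(y)|^{p(x,y)-1})/(1+|y|^{N+sp(x,y)})\,dy$, which is finite by $u\in L_{sp(x,\cdot)}$.

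For the near part, since by $(\mathrm{P_2})$ both $Q(|z|)$ and $|z|$ are invariant under $z\mapsto -z$, averaging the integrand with its $z\to-z$ reflection yields
\begin{equation*}
I_{1}(\epsilon) = \frac{1}{2}\int_{\epsilon<|z|<r}\frac{\phi_{z}(u(x)-u(x+z)) + \phi_{z}(u(x)-u(x-z))}{|z|^{N+sQ(|z|)}}\,dz,
\end{equation*}
with $\phi_{z}(t):=|t|^{Q(|z|)-2}t$, which is odd and belongs to $C^1(\mathbb{R})$ (since $p^- > 2$). The $C^{1,1}$ expansion $u(x\pm z)-u(x) = \pm\nabla u(x)\cdot z + R_\pm(z)$ with $|R_\pm(z)|\leq K|z|^2$, together with the odd-function identity $\phi_{z}(-a)+\phi_{z}(a)=0$ for $a=\nabla u(x)\cdot z$ and the estimate $|\phi_{z}'(\xi)|\leq (p^+-1)(C|z|)^{Q(|z|)-2}$ for intermediate points coming from the mean value theorem, yields
\begin{equation*}
\big|\phi_{z}(u(x)-u(x+z))+\phi_{z}(u(x)-u(x-z))\big|\leq C|z|^{Q(|z|)}.
\end{equation*}
Therefore the symmetrized integrand is dominated by $|z|^{Q(|z|)(1-s)-N}$, bounded on $|z|\leq r\leq 1$ by $|z|^{p^-(1-s)-N}$, an integrable singularity since $p^-(1-s)>0$. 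Consequently $I_1(\epsilon)$ converges absolutely and its limit as $\epsilon\to 0$ exists.

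The main obstacle lies in the variability of the exponent in the near part: because $\phi_{z}$ depends on $z$, the standard constant-$p$ cancellation argument does not apply verbatim. Hypothesis $(\mathrm{P_2})$ is the crucial ingredient---$Q(|-z|)=Q(|z|)$ ensures that the substitution $z\mapsto -z$ preserves both the kernel and the nonlinearity, so the odd-function cancellation of the linear terms in the $C^{1,1}$ Taylor expansion survives, and the quadratic remainder is then controlled by the integrable singularity $|z|^{p^-(1-s)-N}$.
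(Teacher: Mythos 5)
Your proof is correct and follows essentially the same route as the paper: split into a near part where $(\mathrm{P_2})$ gives the $z\mapsto -z$ symmetry needed to cancel the odd linear term of the $C^{1,1}$ expansion (leaving an integrable $|z|^{p^-(1-s)-N}$ singularity), and a far part controlled by Lemma~\ref{rem1} and membership in $L_{sp(x,\cdot)}$. The only differences are organizational---you symmetrize explicitly by averaging $z$ and $-z$ where the paper subtracts off the odd term, and you split at a small radius $r$ with an intermediate annulus, which is slightly more careful given that $u$ is only $C^{1,1}_{loc}$---but the underlying estimates are the same.
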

\begin{proof}
Let $0<\epsilon<1$ and $u\in C^{1,1}_{loc}(\mathbb{R}^{N})\cap
L_{sp(x,\cdot)} $. We write
$$
\displaystyle \int_{\mathbb{R}^{N}\setminus B_{\epsilon}(x)}\frac{|u(x)-u(y)|^{p(x,y)-2}(u(x)-u(y))}{|x-y|^{N+sp(x,y)}}\,dy=I_1+I_2,
$$
where
$$I_1=\int_{B_1(x)\setminus
B_{\epsilon}(x)}\frac{|u(x)-u(y)|^{p(x,y)-2}(u(x)-u(y))}{|x-y|^{N+sp(x,y)}}\,dy
$$ and 
$$
I_2=\int_{\mathbb{R}^{N}\setminus
B_{1}(x)}\frac{|u(x)-u(y)|^{p(x,y)-2}(u(x)-u(y))}{|x-y|^{N+sp(x,y)}}\,dy.
$$
Since $u\in C^{1,1}_{loc}(\mathbb{R}^{N})$, we have
$$
u(x)-u(y)=\nabla u(x)\cdot(x-y)+O(|x-y|^{2}), \ \ \mbox{as} \ \ y \rightarrow x.
$$
Thus, if $|x-y|<1$, we can deduce that
\begin{align*}
& \left| \frac{|\nabla u(x)\cdot(x-y)+O(|x-y|^{2})|^{p(x,y)-2}(\nabla
u(x)\cdot(x-y)+O(|x-y|^{2}))}{|x-y|^{N+sp(x,y)}}\right.\\
&-\left.\frac{|\nabla
u(x)\cdot(x-y)|^{p(x,y)-2}\nabla u(x)\cdot(x-y)}{|x-y|^{N+sp(x,y)}}\right|\\
&\leq C  (p(x,y)-1) |x-y|^{2-N-sp(x,y)} |x-y|^{p(x,y)-2}\\
&\leq C |x-y|^{-N+p(x,y)(1-s)},
\end{align*}
for some positive constant $C$. Note that the assumption $\mathrm{(P_2)}$ and change of variables $w=x-y$ ensure that
$\frac{|\nabla u(x)\cdot(x-y)|^{p(x,y)-2}\nabla
u(x)\cdot(x-y)}{|x-y|^{N+sp(x,y)}}$ is odd, so its integral over
$B_1(x)\setminus B_{\epsilon}(x)$ vanishes. Therefore
\begin{align*}
I_{1}&=\left|\int_{B_1(x)\setminus B_{\epsilon}(x)}\left[
\frac{|\nabla u(x)\cdot(x-y)+O(|x-y|^{2})|^{p(x,y)-2}(\nabla
u(x)\cdot(x-y)+O(|x-y|^{2}))}{|x-y|^{N+sp(x,y)}}\right.\right.\\&-
\left.\left.\frac{|\nabla u(x)\cdot(x-y)|^{p(x,y)-2}\nabla
u(x)\cdot(x-y)}{|x-y|^{N+sp(x,y)}}\,dy\right]
\right|\\
&\leq C \int_{B_1(x)\setminus B_{\epsilon}(x)}
\frac{\,dy}{|x-y|^{N-p(x,y)(1-s)}},
\end{align*}
which implies that $I_1 $ converges as $\epsilon \rightarrow
O^{+}$.

\noindent Recall that $p$ is bounded away from $2$, so
$t\rightarrow|t|^{p(x,y)-1}$ is convex. It follows that
\begin{align}\label{deq1}
I_2&\leq C \left(\int_{\mathbb{R}^{N}\setminus B_1(x)}
\frac{|u(x)|^{p(x,y)-1}}{|x-y|^{N+sp(x,y)}}\,dy+
\int_{\mathbb{R}^{N}\setminus B_1(x)}
\frac{|u(y)|^{p(x,y)-1}}{|x-y|^{N+sp(x,y)}}\,dy\right).
\end{align}
Hence, in light of \eqref{deq1} and Lemma \ref{rem1}, we infer that
\begin{align*}
I_2 &\leq C'_{x} \left(
\max\{|u(x)|^{p^{-}},|u(x)|^{p^{+}}\}\int_{\mathbb{R}^{N}\setminus
B_1(x)} \frac{\,dy}{|x-y|^{N+sp(x,y)}}
+\int_{\mathbb{R}^{N}\setminus B_1(x)}
\frac{|u(y)|^{p(x,y)-1}\,dy} {1+|y|^{N+sp(x,y)}}\right)<+\infty.
\end{align*}
This gives the result.
\end{proof}

Now, we give a technical lemma that will be used in the proof of Theorem \ref{th2b}. In fact, the proof of Lemma \ref{lem1} is similar to that in \cite{3} (see Lemma $5.1$). In our case, we need only to check that the constant $c_0$ is independent of $x$ and $y$.
\begin{lem}\label{lem1}
Suppose that condition $\mathrm{(P_1)}$ is satisfied and consider the function $f(t)=|t|^{p(x,y)-2}t$, where $x,y\in \R^N$ and $t\in \R$. Given $t_1,t_2\in \R$, there exist $\alpha$ between $t_1$ and $t_2$,  and $c_0>0$ independent of $x$ and $y$ such that
$$
|\alpha|^{p(x,y)-2}\geq c_{0}\max\{|t_{1}|^{p(x,y)-2},|t_{2}|^{p(x,y)-2}\}, \ \ \mbox{for all} \ \ x,y\in \mathbb{R}^{N}.
$$
\end{lem}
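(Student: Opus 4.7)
Since $p(x,y)>2$ everywhere, the function $f(t)=|t|^{p(x,y)-2}t$ is $C^1$ with derivative $f'(t)=(p(x,y)-1)|t|^{p(x,y)-2}$. My starting point is the integral form of the mean value theorem,
\begin{equation*}
f(t_1)-f(t_2) = (p(x,y)-1)(t_1-t_2)\int_0^1 |(1-\tau)t_2+\tau t_1|^{p(x,y)-2}\,d\tau,
\end{equation*}
followed by the integral mean value theorem applied to the continuous nonnegative integrand, which produces some $\tau^*\in[0,1]$, and hence a point $\alpha=(1-\tau^*)t_2+\tau^* t_1$ lying between $t_1$ and $t_2$, for which $|\alpha|^{p(x,y)-2}$ equals the integral above. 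The task therefore reduces to bounding that integral from below by a uniform constant times $\max\{|t_1|^{p(x,y)-2},|t_2|^{p(x,y)-2}\}$.

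By the symmetry $t\mapsto -t$ and by relabelling, I may assume $t_1\geq |t_2|\geq 0$ (the case $t_1=t_2$ being trivial). I then split according to the sign of $t_2$. When $t_2\geq 0$, the segment $(1-\tau)t_2+\tau t_1$ is nonnegative and dominates $\tau t_1$, so a short computation shows the integral is at least $t_1^{p(x,y)-2}/(p(x,y)-1)$. When $t_2<0$, the segment vanishes at some $\tau_0\leq 1/2$; restricting to $\tau\in[1/2,1]$, where the absolute value dominates $(2\tau-1)t_1$ thanks to $|t_2|\leq t_1$, and integrating via the substitution $u=2\tau-1$ yields a lower bound $t_1^{p(x,y)-2}/(2(p(x,y)-1))$.

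Combining the two subcases and using the uniform upper bound $p(x,y)-1\leq p^+-1$ from $\mathrm{(P_1)}$ gives the lemma with $c_0=\frac{1}{2(p^+-1)}$, which is independent of $x,y$ as required. I expect the main obstacle to be choosing the right representation at the start: the classical pointwise MVT delivers an unlocalised $\alpha$, which cannot be compared directly with the endpoints; the integral version above encodes $\alpha$ as a convex combination of $t_1$ and $t_2$, so the only substantive work is the opposite-sign subcase, where a neighbourhood of the integrand's zero must be discarded before the remaining portion is estimated.
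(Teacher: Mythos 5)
Your proof is correct and takes a genuinely different route from the paper's. The paper invokes the pointwise Mean Value Theorem and then estimates $|\alpha|^{p(x,y)-2}$ by a three-way case analysis on the relative size and sign of $t_1,t_2$ (same sign with $|t_1|\geq|t_2|/2$; opposite signs; $|t_1|\leq|t_2|/2$), producing a different constant in each case. You instead express the MVT quotient as $\int_0^1 |(1-\tau)t_2+\tau t_1|^{p(x,y)-2}\,d\tau$, pick $\alpha$ via the integral mean value theorem, and bound the integral from below with a single sign split; since $|\alpha|^{p(x,y)-2}$ is uniquely determined by the Lagrange MVT identity whenever $t_1\neq t_2$, your $\alpha$ enjoys the same estimate as the MVT point that actually appears in the later applications, so there is no compatibility issue. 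The payoff is a cleaner uniform constant $c_0=\frac{1}{2(p^+-1)}$, and the only nontrivial estimate is the opposite-sign subcase, where discarding $[0,1/2]$ and using $(1-\tau)t_2+\tau t_1 \geq (2\tau-1)t_1 \geq 0$ for $\tau\in[1/2,1]$ (valid precisely because $|t_2|\leq t_1$) does all the work.
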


\begin{proof}
Without loss of generality, we may assume that $|t_2|\geq |t_1|$. We
distinguish two cases:
\medskip

\noindent \textbf{Case} $1$: If $|t_1|\geq \frac{|t_2|}{2}$. So,  when $t_1$
and $t_2$ have the same sign, $\alpha$ is between $t_1$ and $t_2$,
we have
$$
|\alpha|^{p(x,y)-2} \geq |t_1|^{p(x,y)-2}\geq \left(\frac{|t_2|}{2}\right)^{p(x,y)-2}\geq \frac{|t_2|^{p(x,y)-2}}{2^{p^+-2}},
$$
which gives that $c_0=\frac{1}{2^{p^+-2}}$.
Now, we suppose that $t_1$ and $t_2$ are of opposite sign. By the Mean Value Theorem, there exists $\alpha$ between $t_1$ and $t_2$ such that 
\begin{equation} \label{eqqq}
f(t_{2})-f(t_{1})=f'(\alpha)(t_{2}-t_{1}).
\end{equation}
Then using condition $\mathrm{(P_1)}$ \normalcolor, for every $x,y\in \mathbb{R}^{N}$, we get
\begin{align*}
2(p^{+}-1) |\alpha|^{p(x,y)-2}|t_2| &\geq  2(p(x,y)-1)
|\alpha|^{p(x,y)-2}|t_2|\geq
|f'(\alpha)||t_2-t_1|\\&=|f(t_{2})-f(t_{1})|\geq|f(t_2)|=|t_2|^{p(x,y)-1},
\end{align*}
which implies that
$$|\alpha|^{p(x,y)-2}\geq c_0|t_2|^{p(x,y)-2}=c_0\max\{|t_2|^{p(x,y)-2},|t_1|^{p(x,y)-2}\},$$
whith $c_0=\frac{1}{2(p^{+}-1)}$.

\medskip

\noindent \textbf{Case} $2$: If $|t_1|\leq \frac{|t_2|}{2}$. 
Using as before the Mean Value Theorem, there exists $\alpha$ between $t_1$ and $t_2$ for which \eqref{eqqq} holds. Then, in view of assumption $\mathrm{(P_1)}$, we obtain that  the following holds for   every $x,y\in \mathbb{R}^{N}$,
\begin{align*}
2(p^{+}-1) |\alpha|^{p(x,y)-2}|t_2| &\geq  2(p(x,y)-1)
|\alpha|^{p(x,y)-2}|t_2|\geq |f(t_2)-f(t_1)|\\&\geq
|f(t_{2})|-|f(t_{1})|= |t_2|^{p(x,y)-1}- |t_1|^{p(x,y)-1}\\
&\geq \left(1-\frac{1}{2^{p(x,y)-1}}\right)|t_2|^{p(x,y)-1}\geq
\left(1-\frac{1}{2^{p^--1}}\right)|t_2|^{p(x,y)-1} ,
\end{align*}
which proves that
$$
|\alpha|^{p(x,y)-2}\geq c_0|t_2|^{p(x,y)-2}, \quad \text{ with } c_0=\frac{1}{p^{+}-1}\frac{2^{p^--1}-1}{2^{p^-}}.
$$
This
completes the proof of our desired result.
\end{proof}
\section{Maximum principles } \label{sec3}
This section is devoted to the proof of several formulations of the
maximum principle.
\begin{theorem} \label{th1b}
Let $\Omega$ be a bounded domain in $\mathbb{R}^{N}$ and suppose
that $\mathrm{(P_1)}$-$\mathrm{(P_2)}$ hold. Moreover, assume that $u \in
C^{1,1}_{loc}(\mathbb{R}^{N})\cap L_{sp(x,\cdot)}$ be lower
semi-continuous on $\overline{\Omega_{}}$, and satisfies
\begin{align}\label{eq2b}
\begin{cases}
\vspace{0.1cm}
(-\Delta)^{s}_{p(x,\cdot)}u(x)\geq 0\quad &\mbox{in }\Omega, \\
u(x)\geq 0,     \quad &\mbox{in }\mathbb{R}^{N}\setminus \Omega.
\end{cases}
\end{align}
Then
\begin{equation}\label{eq3b}
u(x)\geq 0\quad \text{ for all }x\in \Omega.
\end{equation}
If $u(x)=0$ at some point $x\in \Omega$, then
$$
u(x)=0 \text{ almost everywhere in }\mathbb{R}^{N}.
$$
\end{theorem}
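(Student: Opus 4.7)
The plan is to argue by contradiction via analysis at an interior minimum point, exploiting the strict monotonicity of $t\mapsto |t|^{p(x,y)-2}t$. Since $u$ is lower semi-continuous on the compact set $\overline{\Omega}$, it attains its infimum at some $x_0\in\overline{\Omega}$. Because $\Omega$ is open, $\partial\Omega\subset\mathbb{R}^N\setminus\Omega$, so the exterior condition $u\geq 0$ on $\mathbb{R}^N\setminus\Omega$ forces any negative infimum to be attained at an interior point $x_0\in\Omega$.

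To prove \eqref{eq3b}, assume for contradiction that $u(x_0)<0$. At this interior minimum, for every $y\in\mathbb{R}^N$ we have $u(x_0)-u(y)\leq 0$, so
$$
|u(x_0)-u(y)|^{p(x_0,y)-2}\,(u(x_0)-u(y))\leq 0,
$$
and the integrand defining $(-\Delta)^s_{p(x,\cdot)}u(x_0)$ is pointwise non-positive. Moreover, on $\mathbb{R}^N\setminus\Omega$ (a set of positive measure, since $\Omega$ is bounded) we have $u(y)\geq 0>u(x_0)$, so the integrand is strictly negative there. Consequently $(-\Delta)^s_{p(x,\cdot)}u(x_0)<0$, contradicting the hypothesis. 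Note that the integral is absolutely convergent near $x_0$ without invoking a principal value cancellation, since $\nabla u(x_0)=0$ at the interior minimum forces $u(x_0)-u(y)=O(|y-x_0|^2)$, and the same estimates as in the proof of Lemma \ref{def} then give integrability.

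For the strong version, suppose $u(x_1)=0$ at some $x_1\in\Omega$. By the first part, $u\geq 0$ in all of $\mathbb{R}^N$, so $x_1$ is a global minimum. Then $u(x_1)-u(y)=-u(y)\leq 0$, and therefore
$$
(-\Delta)^s_{p(x,\cdot)}u(x_1)=-\int_{\mathbb{R}^N}\frac{u(y)^{p(x_1,y)-1}}{|x_1-y|^{N+sp(x_1,y)}}\,dy\leq 0.
$$
Combined with the hypothesis $(-\Delta)^s_{p(x,\cdot)}u(x_1)\geq 0$, the integral of this non-negative function must vanish, and since $p(x_1,y)-1>1$ by assumption $\mathrm{(P_1)}$, we conclude $u(y)=0$ for almost every $y\in\mathbb{R}^N$.

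The main obstacle is essentially bookkeeping rather than a deep technicality: one must confirm that the singular integral is genuinely convergent at the minimum point (handled by $\nabla u(x_0)=0$ combined with $p^->2$, as in Lemma \ref{def}) and that $\mathbb{R}^N\setminus\Omega$ contributes strictly to the integral, which is guaranteed by $\Omega$ being bounded. Notice that Lemma \ref{lem1} is not invoked here; it is reserved for the anti-symmetric maximum principle (Theorem \ref{th2b}) where one needs quantitative lower bounds on $|\alpha|^{p(x,y)-2}$ to compare opposite contributions rather than merely to read off a sign.
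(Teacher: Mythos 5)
Your argument is correct and follows essentially the same route as the paper: locate a negative minimum of $u$, observe the integrand of $(-\Delta)^s_{p(x,\cdot)}u$ is pointwise non-positive there and strictly negative on $\mathbb{R}^N\setminus\Omega$, and contradict the supersolution inequality; the strong statement is then read off by sandwiching the sign of the same integral. You add two small clarifications that the paper leaves implicit (that the negative minimum must be interior because $\partial\Omega\subset\mathbb{R}^N\setminus\Omega$, and that $\nabla u(x_0)=0$ together with $p^->2$ gives absolute convergence at the minimum), but the proof is otherwise the same.
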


\begin{proof} If \eqref{eq3b} does not hold, then the lower semi-continuity of u on $\overline{\Omega}$ implies that there exists $x^{0}\in \overline{\Omega}$ such that
$$
u(x^{0})= \min_{\overline{\Omega}} u <0.
$$
Using the fact that $u(x)\geq 0$ in $\mathbb{R}^{N}\setminus
\Omega$, we get
\begin{align*}
(-\Delta)^{s}_{p(x^{0},\cdot)}u(x^{0})&= \text{p.v.}\, \int_{\mathbb{R}^{N}}\frac{|u(x^{0})-u(y)|^{p(x^{0},y)-2}(u(x^{0})-u(y))}{|x^{0}-y|^{N+sp(x^{0},y)}}\,dy\\
&\leq  \int_{\mathbb{R}^{N}\setminus \Omega}\frac{|u(x^{0})-u(y)|^{p(x^{0},y)-2}(u(x^{0})-u(y))}{|x^{0}-y|^{N+sp(x^{0},y)}}\,dy< 0.
\end{align*}
This contradicts the first inequality in \eqref{eq2b}.  Therefore,
we conclude the assertion \eqref{eq3b}. If at some point $x^{0}\in
\Omega$, $u(x^{0})=0$, then
\begin{eqnarray}\label{eq4b}
(-\Delta)^{s}_{p(x^0,\cdot)}u(x^{0})=\text{p.v.}\, \int_{\mathbb{R}^{N}}\frac{|u(y)|^{p(x^{0},y)-2}(-u(y))}{|x^{0}-y|^{N+sp(x^{0},y)}}\,dy
\leq 0.
\end{eqnarray}
Next, from the first inequality in \eqref{eq2b}, we have
 \begin{eqnarray}\label{eq5b}
\text{p.v.}\, \int_{\mathbb{R}^{N}}\frac{|u(y)|^{p(x^{0},y)-2}(-u(y))}{|x^{0}-y|^{N+sp(x^{0},y)}}\,dy\geq
0.
\end{eqnarray}
Combining \eqref{eq4b} and \eqref{eq5b}, we obtain
\begin{eqnarray*}
\text{p.v.}\, \int_{\mathbb{R}^{N}}\frac{|u(y)|^{p(x^{0},y)-2}u(y)}{|x^{0}-y|^{N+sp(x^{0},y)}}\,dy=
0.
\end{eqnarray*}
Then $u(x)= 0$ a.e. in $\R^N$, which concludes the proof.
\end{proof}

In order to state our main result we introduce some notation.

\noindent Let $\lambda\in \mathbb{R}$ and
$$
T_{\lambda}=\{x\in \mathbb{R}^{N}, \ x_{1}=\lambda\},
$$
be the hyperplane at height $\lambda$. Let
$$
H_{\lambda}=\{x\in \mathbb{R}^{N}, \ x_{1}<\lambda \},
$$
be the upper half-space. For each $x\in H_{\lambda}$ let
$$
x_{\lambda}=(2\lambda -x_{1}, x_{2},...,x_{N})
$$
be the reflection of $x$ about the plane $T_{\lambda}$. We denote
$$
w_{\lambda}(x)=u(x_{\lambda})- u(x)=u_{\lambda}(x)- u(x).
$$
It is easy to remark that 
$$
w_{\lambda}(x_{\lambda})=-
w_{\lambda}(x).
$$ 
Next, we give a variant of maximum principle for
anti-symmetric functions in bounded domain.

\begin{theorem} \label{th2b}
Let $\Omega_{\lambda}=H_{\lambda}\cap B(0,1)$ be a bounded domain in $H_{\lambda}$. Assume that
assumptions  $\mathrm{(P_1)}$-$\mathrm{(P_2)}$ hold. Let $u \in
C^{1,1}_{loc}(\mathbb{R}^{N})\cap L_{sp(x,\cdot)}$  be a lower semi-continuous function on $\overline{B}_1(0)$ such that
\begin{equation}\label{eq6b}
\begin{cases}
\vspace{0.1cm}
&(-\Delta)^{s}_{p(x,\cdot)}u_{\lambda}(x)-(-\Delta)^{s}_{p(x,\cdot)}u(x) \geq 0\quad \mbox{in }B_1(0), \\
\vspace{0.1cm}
&w_{\lambda}(x)\geq 0,     \quad \mbox{in }H_{\lambda}\setminus B_1(0).\\               
&u\in (0,m),~~ 0<m<1,
\end{cases}        
\end{equation}
Then
\begin{equation}\label{eq7b}
w_{\lambda}(x)\geq 0,~~~~~~\forall~~ x\in H_{\lambda}.
\end{equation}
If $w_{\lambda}(x)=0$ at some point $x\in B_1(0)$, then
$$
w_{\lambda}(x)=0 ~~almost~~everywhere~~in~~\mathbb{R}^{N}.
$$
\end{theorem}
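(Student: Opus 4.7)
The plan is to argue by contradiction, mirroring the structure of Theorem \ref{th1b} but combined with reflection across $T_\lambda$ to exploit the antisymmetric structure. Suppose \eqref{eq7b} fails. Since $w_\lambda\equiv 0$ on $T_\lambda$ (by the antisymmetry identity $w_\lambda(x_\lambda)=-w_\lambda(x)$) and $w_\lambda\ge 0$ on $H_\lambda\setminus B_1(0)$ by hypothesis, the lower semicontinuity of $w_\lambda$ on the compact set $\overline{\Omega_\lambda}$ produces a point $x^0\in\Omega_\lambda$ with $w_\lambda(x^0)=\min_{\overline{\Omega_\lambda}}w_\lambda<0$. The goal is to show this forces $(-\Delta)^s_{p(x^0,\cdot)}u_\lambda(x^0)-(-\Delta)^s_{p(x^0,\cdot)}u(x^0)<0$, contradicting the first inequality in \eqref{eq6b}.

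Writing $\phi_q(t)=|t|^{q-2}t$, I would split each defining integral as $\int_{\mathbb{R}^N}=\int_{H_\lambda}+\int_{H_\lambda^c}$ and change variables $y\mapsto y_\lambda$ in the piece on $H_\lambda^c$. With $p=p(x^0,y)$, $\tilde p=p(x^0,y_\lambda)$, and
\begin{equation*}
t_1=u(x^0)-u_\lambda(y),\ t_2=u_\lambda(x^0)-u(y),\ t_3=u(x^0)-u(y),\ t_4=u_\lambda(x^0)-u_\lambda(y),
\end{equation*}
the difference rewrites as $I_1+I_2$, where
\begin{equation*}
I_1=\int_{H_\lambda}\frac{\phi_p(t_4)-\phi_p(t_3)}{|x^0-y|^{N+sp}}\,dy,\qquad I_2=\int_{H_\lambda}\frac{\phi_{\tilde p}(t_2)-\phi_{\tilde p}(t_1)}{|x^0-y_\lambda|^{N+s\tilde p}}\,dy.
\end{equation*}
For $y\in H_\lambda$ the elementary inequality $|x^0-y|\le|x^0-y_\lambda|$ holds, and assumption $\mathrm{(P_2)}$ with the monotonicity of $Q$ yields $p\le\tilde p$. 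The minimality of $x^0$ gives $t_4-t_3=w_\lambda(x^0)-w_\lambda(y)\le 0$, so the monotonicity of $\phi_p$ already delivers $I_1\le 0$.

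The core step, and the main obstacle, is upgrading this to the strict inequality $I_1+I_2<0$. The $I_2$ integrand carries the sign of $t_2-t_1=w_\lambda(x^0)+w_\lambda(y)$, which can be positive on $\{y\in H_\lambda:w_\lambda(y)>-w_\lambda(x^0)\}$, and this positive contribution must be dominated by the negative part of $I_1$. My plan is to apply the mean value theorem to each integrand, invoke Lemma \ref{lem1} on the $I_1$ side to obtain the sharp lower bound $|\xi|^{p-2}\ge c_0\max\{|t_3|^{p-2},|t_4|^{p-2}\}$, and use the trivial upper bound $|\eta|^{\tilde p-2}\le(2m)^{\tilde p-2}$ on the $I_2$ side coming from $u\in(0,m)$. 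The kernel comparison $|x^0-y|\le|x^0-y_\lambda|$ together with $p\le\tilde p$, coupled with the precise constraint $p^->1-1/\ln m$ (which is designed so that the parasitic factor $(2m)^{\tilde p-p}$ is uniformly controlled by powers of $|x^0-y_\lambda|^{-s(\tilde p-p)}$), is what pushes the pointwise integrand to a strictly negative quantity, delivering the contradiction.

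For the rigidity conclusion, assume $w_\lambda(x^0)=0$ at some $x^0\in B_1(0)$; the first part gives $w_\lambda\ge 0$ on $H_\lambda$, so $x^0$ is a minimum. At such a point $t_1=t_4$ and $t_2=t_3$, and the same splitting together with $t_1\le t_3$ (from $u_\lambda(y)\ge u(y)$ on $H_\lambda$) yields an integrand that, by the very same kernel comparison used above, is pointwise $\le 0$. Combined with the hypothesis $\ge 0$, this forces the integrand to vanish a.e., i.e.~$\phi_p(t_1)=\phi_p(t_3)$ a.e., giving $u_\lambda\equiv u$ a.e.~on $H_\lambda$ and hence $w_\lambda\equiv 0$ a.e.~on $\mathbb{R}^N$ by antisymmetry.
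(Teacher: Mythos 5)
Your starting decomposition $\Gamma(x^0)=I_1+I_2$ (splitting $\mathbb{R}^N=H_\lambda\cup H_\lambda^c$ and reflecting) is exactly the paper's first step, and you correctly observe $I_1\leq 0$ from monotonicity of $\phi_p$ and $t_4-t_3=w_\lambda(x^0)-w_\lambda(y)\leq 0$. But the plan you describe for handling the remaining positive contributions of $I_2$ would not close. The paper does not try to beat $I_2$ with $I_1$ pointwise; it first regroups $I_1+I_2$ as $J_1+J_2$, where $J_1$ carries the kernel \emph{difference} $\kappa(x^0,y)=|x^0-y|^{-N-sp}-|x^0-y_\lambda|^{-N-s\tilde p}>0$ and the factor $\phi_p(t_4)-\phi_p(t_3)\leq 0$ (so $J_1<0$ strictly), and $J_2$ carries all four $\phi$-differences over the \emph{single} kernel $|x^0-y_\lambda|^{-N-s\tilde p}$. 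The crucial next step is the algebraic rearrangement
\[
\phi_p(t_4)-\phi_p(t_3)+\phi_{\tilde p}(t_2)-\phi_{\tilde p}(t_1)
= \bigl(g(t_4)-g(t_3)\bigr) + \bigl(\phi_{\tilde p}(t_4)-\phi_{\tilde p}(t_1)\bigr) + \bigl(\phi_{\tilde p}(t_2)-\phi_{\tilde p}(t_3)\bigr),
\]
where $g:=\phi_p-\phi_{\tilde p}$, and $t_4-t_1=t_2-t_3=w_\lambda(x^0)$. Applying the mean value theorem to each bracket produces a \emph{single} intermediate value $t_0$ in the $g$-term, so the sign question reduces to $g'(t_0)=(p-1)|t_0|^{p-2}-(\tilde p-1)|t_0|^{\tilde p-2}\geq 0$, which is exactly where $p^->1-1/\ln m$ is used: the auxiliary map $h(t)=(t-1)|t_0|^{t-2}$ has $h'(t)=|t_0|^{t-2}\bigl(1+(t-1)\ln|t_0|\bigr)<0$ for $t\geq 1-1/\ln m$, because $|t_0|<m<1$. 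This is a monotonicity-in-the-exponent statement, not, as you describe, a mechanism by which a factor $(2m)^{\tilde p-p}$ is ``uniformly controlled by powers of $|x^0-y_\lambda|^{-s(\tilde p-p)}$''.

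The gap in your version is precisely that applying the mean value theorem separately to $\phi_p$ over $(t_3,t_4)$ and to $\phi_{\tilde p}$ over $(t_1,t_2)$ produces two unrelated points $\xi$ and $\eta$. The comparison you would need, $(p-1)|\xi|^{p-2}\geq(\tilde p-1)|\eta|^{\tilde p-2}$ (pointwise in $y$, so that the kernel inequality can finish the job), does not follow from anything you have: Lemma~\ref{lem1} gives only $|\xi|^{p-2}\geq c_0\max\{|t_3|^{p-2},|t_4|^{p-2}\}$, a quantity that tends to $0$ when $u(x^0)$ is close to $u(y)$ and $u_\lambda(x^0)$ to $u_\lambda(y)$, while your upper bound on $|\eta|^{\tilde p-2}$ is a fixed constant. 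The same difficulty infects your rigidity step: at a vanishing minimum one has $t_1=t_4$ and $t_2=t_3$, so the integrand is $\bigl(\phi_p(t_1)-\phi_p(t_3)\bigr)K_1-\bigl(\phi_{\tilde p}(t_1)-\phi_{\tilde p}(t_3)\bigr)K_2$ with $K_1\geq K_2>0$, and the kernel comparison alone does not force this to be $\leq 0$ because $\phi_p$ and $\phi_{\tilde p}$ are \emph{different} functions; one again needs the $g$-decomposition and $g'\geq 0$ to conclude, which the paper does via estimate \eqref{h1}.
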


Before giving the proof of the above theorem, we need to establish
the following crucial lemma.
\begin{lem}\label{le0} Let $\Omega$ be a bounded domain in $H_{\lambda}$. Suppose that  conditions $\mathrm{(P_1)}$-$\mathrm{(P_2)}$ are satisfied. Then, for $x^{0}\in \overline{\Omega}$, we have that
$$
\frac{1}{|x^{0}-y|^{N+sp(x^{0},y)}}>\frac{1}{|x^{0}-y_{\lambda}|^{N+sp(x^{0},y_{\lambda})}} \quad \text{for all }y \in H_{\lambda}.
$$
\end{lem}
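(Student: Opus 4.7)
The plan is to combine a purely geometric observation about reflections with the two monotonicity properties bundled into assumption $\mathrm{(P_2)}$. Throughout, write $x^0=(x^0_1,\tilde x)$ and $y=(y_1,\tilde y)$, so that $y_\lambda=(2\lambda-y_1,\tilde y)$, and assume $x^0 \in \overline\Omega \subset \overline{H_\lambda}$ with $x^0_1<\lambda$ (the relevant case in the applications; the non-strict case $x_1^0=\lambda$ would give equality, which is never used in the proof of Theorem \ref{th2b}).

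First I would establish the geometric inequality
\[
|x^0-y|<|x^0-y_\lambda|.
\]
Since the $\tilde y$-components agree in $y$ and $y_\lambda$, this reduces to comparing $|x^0_1-y_1|$ with $|x^0_1+y_1-2\lambda|$. Writing $a=\lambda-x^0_1>0$ and $b=\lambda-y_1>0$ (both positive because $x^0,y\in H_\lambda$), the two quantities become $|a-b|$ and $a+b$, and the strict inequality $|a-b|<a+b$ is immediate. Squaring and adding $|\tilde x-\tilde y|^2$ on both sides yields the displayed inequality.

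Next I would invoke $\mathrm{(P_2)}$. Since $p(x^0,y)=Q(|x^0-y|)$ and $p(x^0,y_\lambda)=Q(|x^0-y_\lambda|)$, the monotonicity of $Q$ gives
\[
p(x^0,y)\le p(x^0,y_\lambda),
\]
and, more importantly, the strict monotonicity of $t\mapsto t^{Q(t)}$ yields
\[
|x^0-y|^{p(x^0,y)} \;<\; |x^0-y_\lambda|^{p(x^0,y_\lambda)}.
\]
Raising to the power $s\in(0,1)$ preserves the strict inequality, and multiplying the result by the (strict) inequality $|x^0-y|^{N}<|x^0-y_\lambda|^{N}$ (which follows from Step 1 since $N>0$), I obtain
\[
|x^0-y|^{N+sp(x^0,y)} \;<\; |x^0-y_\lambda|^{N+sp(x^0,y_\lambda)},
\]
which is precisely the reciprocal of the claim.

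There is no serious obstacle in any of these steps individually; the real content of the lemma is the recognition that $\mathrm{(P_2)}$ has been formulated exactly to handle the variable-exponent difficulty. Indeed, with a constant exponent $p$ the inequality would reduce at once to the geometric fact $|x^0-y|<|x^0-y_\lambda|$, but here the exponents on the two sides differ, and without the monotonicity of $t\mapsto t^{Q(t)}$ there would be no way to compare $|x^0-y|^{sp(x^0,y)}$ with $|x^0-y_\lambda|^{sp(x^0,y_\lambda)}$. The only delicate point worth flagging in the write-up is the implicit requirement $x^0_1<\lambda$ so that the reflection strictly separates $y$ from $y_\lambda$ relative to $x^0$; I would state this explicitly to avoid the degenerate boundary case on $T_\lambda$.
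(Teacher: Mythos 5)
Your proof is correct and follows essentially the same route as the paper: the geometric inequality $|x^0-y|<|x^0-y_\lambda|$ combined with the monotonicity of $t\mapsto t^{Q(t)}$ from $\mathrm{(P_2)}$. You fill in two details the paper leaves implicit -- the coordinate computation behind the geometric inequality and the factorization $t^{N+sQ(t)}=t^N\bigl(t^{Q(t)}\bigr)^s$ used to pass from the monotonicity of $t^{Q(t)}$ to that of $t^{N+sQ(t)}$ -- and you rightly flag that strictness degenerates when $x^0_1=\lambda$, a boundary case the paper's hypothesis $x^0\in\overline\Omega$ technically allows but the applications never hit.
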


\begin{proof}
Firstly, we observe that, for $x^{0}\in \overline{\Omega}$,
$$|x^{0}-y|<|x^{0}-y_{\lambda}|, \forall~~y~~\in H_{\lambda}. $$
Observe that, due to $\mathrm{(P_2)}$, the function $t\rightarrow t^{Q(t)}$ is increasing in $(0,+\infty)$. Therefore
\begin{align*}
|x^{0}-y|^{N+sQ(|x^{0}-y|)} <
|x^{0}-y_{\lambda}|^{N+sQ(|x^{0}-y_{\lambda}|)}\quad \forall y \in H_{\lambda},
\end{align*}
 which implies that
$$
\frac{1}{|x^{0}-y|^{N+sp(x^{0},y)}}>\frac{1}{|x^{0}-y_{\lambda}|^{N+sp(x^{0},y_{\lambda})}}\quad \forall y \in H_{\lambda}.
$$
That finishes the proof.
\end{proof}
Next we give the proof of  Theorem \ref{th2b}.

\begin{proof} If \eqref{eq7b} does not hold, then the lower semi-continuity of u on $\overline{B}_1(0)$ implies that there exists $x^{0}\in \overline{B}_1(0)$ such that
$$
w_{\lambda}(x^{0})= \min_{\overline{B}_1(0) }w_{\lambda} <0.
$$
We denote $$f=f_{p(x,y)}(t)=|t|^{p(x,y)-2}t, \ f_{*}=f_{p(x,y_{\lambda})}(t)=|t|^{p(x,y_{\lambda})-2}t \  \mbox{and} \
 g(t)=f(t)-f_{*}(t).$$
 Note that $f$ and $f_{*}$ are two increasing functions. Let
$$
\kappa(x^{0},y)=\frac{1}{|x^{0}-y|^{N+sp(x^{0},y)}}-\frac{1}{|x^{0}-y_{\lambda}|^{N+sp(x^{0},y_{\lambda})}}
$$
and
\begin{eqnarray*}
\Gamma(x^{0})=(-\Delta)^{s}_{p(x^0,\cdot)}u_{\lambda}(x^{0})-(-\Delta)^{s}_{p(x^0,\cdot)}u(x^{0}).
\end{eqnarray*}
We write
\begin{align*}
\Gamma(x^{0})&= \int_{\mathbb{R}^{N}}\frac{f(u_{\lambda}(x^{0})-u_{\lambda}(y))-f(u(x^{0})-u({y}))}{|x^{0}-y|^{N+sp(x^{0},y)}}\,dy\nonumber\\
&=\int_{H_{\lambda}}\frac{f(u_{\lambda}(x^{0})-u_{\lambda}(y))-f(u(x^{0})-u({y}))}{|x^{0}-y|^{N+sp(x^{0},y)}}\,dy\nonumber\\
&\quad +\int_{H_{\lambda}}\frac{f_{*}(u_{\lambda}(x^{0})-u(y))-f_{*}(u(x^{0})-u_{\lambda}(y))}{|x^{0}-y_{\lambda}|^{N+sp(x^{0},y_{\lambda})}}\,dy\nonumber\\
&= \int_{H_{\lambda}}\kappa(x^{0},y) \Big(f(u_{\lambda}(x^{0})-u_{\lambda}(y))-f(u(x^{0})-u({y}))\Big)\,dy\nonumber\\
&\quad +\int_{H_{\lambda}}\frac{f(u_{\lambda}(x^{0})-u_{\lambda}(y))-f(u(x^{0})-u(y))+f_{*}(u_{\lambda}(x^{0})-u(y))-f_{*}(u(x^{0})-u_{\lambda}(y))}
{|x^{0}-y_{\lambda}|^{N+sp(x^{0},y_{\lambda})}}\,dy\nonumber\\
&:= J_{1}(x^{0})+  J_{2}(x^{0}).
\end{align*}
Observe that
\begin{eqnarray}\label{eq8b}
[u_{\lambda}(x^{0})-u_{\lambda}(y)]-[u(x^{0})-u(y)]=w_{\lambda}(x^{0})-w_{\lambda}(y)\leq 0.
\end{eqnarray}
Therefore, using the fact that  $w_{\lambda}(y_{\lambda})=-
w_{\lambda}(y)$ and $w_{\lambda}(x^{0})<0$, we deduce that
$w_{\lambda}(x^{0})-w_{\lambda}(y)$ is not identically null. Then, using
\eqref{eq8b} and Lemma \ref{le0}, we conclude that
\begin{eqnarray}\label{eq9b}
 J_{1}(x^{0})<0.
 \end{eqnarray}
 On the other hand, one has
\begin{align*}
J_2(x^{0})&=\int_{H_{\lambda}}\frac{f(u_{\lambda}(x^{0})-u_{\lambda}(y))-f(u(x^{0})-u(y))+f_{*}(u_{\lambda}(x^{0})-u(y))-f_{*}(u(x^{0})-u_{\lambda}(y))}
{|x^{0}-y_{\lambda}|^{N+sp(x^{0},y_{\lambda})}}\,dy\\
&=\int_{H_{\lambda}}\frac{\Big(f(u_{\lambda}(x^{0})-u_{\lambda}(y)) -f_{*}(u_{\lambda}(x^{0})-u_{\lambda}(y))\Big)
 +f_{*}(u_{\lambda}(x^{0})-u_{\lambda}(y))-f_{*}(u(x^{0})-u_{\lambda}(y))}{|x^{0}-y_{\lambda}|^{N+sp(x^{0},y_{\lambda})}}\,dy\nonumber \\
&\quad-\int_{H_{\lambda}}\frac{\Big(f(u(x^{0})-u(y))-f_{*}(u(x^{0})-u(y))\Big)-f_{*}(u_{\lambda}(x^{0})-u(y))+f_{*}(u(x^{0})-u(y))}
{|x^{0}-y_{\lambda}|^{N+sp(x^{0},y_{\lambda})}}\,dy,
\end{align*}
which can be written as
\begin{align*}
J_2(x^{0})
&=\int_{H_{\lambda}}\frac{\Big(f(u_{\lambda}(x^{0})-u_{\lambda}(y)) -f_{*}(u_{\lambda}(x^{0})-u_{\lambda}(y))\Big)-
\Big(f(u(x^{0})-u(y))-f_{*}(u(x^{0})-u(y))\Big)}{|x^{0}-y_{\lambda}|^{N+sp(x^{0},y_{\lambda})}}\,dy \\
 &\quad+\int_{H_{\lambda}}\frac{f_{*}(u_{\lambda}(x^{0})-u_{\lambda}(y))-f_{*}(u(x^{0})-u_{\lambda}(y))  +f_{*}(u_{\lambda}(x^{0})-u(y))-f_{*}(u(x^{0})-u(y))}{|x^{0}-y_{\lambda}|^{N+sp(x^{0},y_{\lambda})}}\,dy\\
 &=\int_{H_{\lambda}}\frac{g\Big(u_{\lambda}(x^{0})-u_{\lambda}(y)\Big)-
g\Big( u(x^{0})-u(y))\Big)}{|x^{0}-y_{\lambda}|^{N+sp(x^{0},y_{\lambda})}}\,dy\nonumber \\
 &\quad+\int_{H_{\lambda}}\frac{f_{*}(u_{\lambda}(x^{0})-u_{\lambda}(y))-f_{*}(u(x^{0})-u_{\lambda}(y))  +f_{*}(u_{\lambda}(x^{0})-u(y))-f_{*}(u(x^{0})-u(y))}{|x^{0}-y_{\lambda}|^{N+sp(x^{0},y_{\lambda})}}\,dy.
 \end{align*}
 Thus, there exist $t_0\in\left( u_{\lambda}(x^{0})-u_{\lambda}(y), u(x^{0})-u(y)\right)$, $t_1\in \left( u_{\lambda}(x^{0})-u_{\lambda}(y), u(x^{0})-u_{\lambda}(y)\right)$ and $t_2\in \left( u_{\lambda}(x^{0})-u(y), u(x^{0})-u(y)\right)$ such that
 \begin{equation}\label{s}
  J_2(x^{0})=\int_{H_{\lambda}}\frac{\Big(w_{\lambda}(x^{0})-w_{\lambda}(y)\Big) g'(t_{0})}
{|x^{0}-y_{\lambda}|^{N+sp(x^{0},y_{\lambda})}}\,dy+ w_{\lambda}(x^{0})
\int_{H_{\lambda}}\frac{f_{*}'(t_{1})+f_{*}'(t_{2}) }{|x^{0}-y_{\lambda}|^{N+sp(x^{0},y_{\lambda})}}\,dy,
  \end{equation}
which implies, since $ w_{\lambda}(x^{0})<0$ and $f_{*}'(t)\geq 0$, that
  \begin{eqnarray}\label{hhh}
   J_2(x^{0})\leq\int_{H_{\lambda}}\frac{  \Big(w_{\lambda}(x^{0})-w_{\lambda}(y)\Big) g'(t_{0})}
{|x^{0}-y_{\lambda}|^{N+sp(x^{0},y_{\lambda})}}\,dy.
\end{eqnarray}
In what follows we  show that $g'(t_{0})\geq 0$ with 
\begin{equation}\label{g}g'(t_{0})=(p(x,y)-1)|t_{0}|^{p(x,y)-2}-(p(x,y_{\lambda})-1)|t_{0}|^{p(x,y_{\lambda})-2}.
\end{equation}
For this, we consider the function $h(t)=(t-1)|t_{0}|^{t-2}$ for $t>0$. By a simple computation, we obtain
$$
 h'(t)=|t_{0}|^{t-2}\Big(1+(t-1)\ln(|t_{0}|)\Big).
$$
Recalling that $u\in (0,m)$ and $m<1$, then $|t_{0}|\in (0,m).$ This shows that $h'(t)<0$, for all $t\geq 1-\frac{1}{\ln(|t_{0}|)}$. Consequently, $h$ is decreasing for all $t\geq 1-\frac{1}{\ln(m)}$. On the other hand, in light of $\mathrm{(P_2)}$, we have 
$$
1-\frac{1}{\ln(m)}\leq p(x,y)\leq p(x,y_{\lambda}).
$$ 
Then $h(p(x,y))\geq h(p(x,y_{\lambda}))$. This proves, due to \eqref{g}, that
\begin{eqnarray}\label{hh}
 g'(t_{0})=(p(x,y)-1)|t_{0}|^{p(x,y)-2}-(p(x,y_{\lambda})-1)|t_{0}|^{p(x,y_{\lambda})-2}\geq 0,
\end{eqnarray}
and so, using the fact $w_{\lambda}(x^{0})-w_{\lambda}(y)\leq0$, \eqref{hhh} and \eqref{hh}, we infer that
\begin{eqnarray}\label{eq103b}
 J_{2}(x^{0})\leq 0.
\end{eqnarray}
 Consequently, by combining \eqref{eq9b} and \eqref{eq103b}, we obtain
\begin{equation} \label{ineqq}
(-\Delta)^{s}_{p(x^0,\cdot)}u_{\lambda}(x^{0})-(-\Delta)^{s}_{p(x^0,\cdot)}u(x^{0})<0.
\end{equation}
 This contradicts the first equation of \eqref{eq6b}. Hence we must have
 \begin{equation*}
w_{\lambda}(x)\geq 0,~~~~~~\forall x\in B_1(0).
\end{equation*}
Finally, if $w_{\lambda}(x^{0})=0$ at some $x^{0}\in B_1(0)$, then $x^{0}$
is a minimum of $w_{\lambda}$ in $B_1(0)$. Thus, by \eqref{hhh}
\begin{equation}\label{h1}
 J_{2}(x^{0})\leq\int_{H_{\lambda}}\frac{-w_{\lambda}(y)g'(t_{3})}
{|x^{0}-y_{\lambda}|^{N+sp(x^{0},y_{\lambda})}}\,dy,
 \end{equation}
 and then from \eqref{g},  $J_{2}(x^{0})\leq 0$. Moreover, from \eqref{eq6b}, that $J_{1}(x^{0})\geq 0$. Then
\begin{equation}\label{eq11b}
 f(u_{\lambda}(x)-u_{\lambda}(y))-f(u(x)-u({y}))\geq 0.
\end{equation}
This proves, using the increasing property of $f$, 
that
$$ [u_{\lambda}(x^{0})-u_{\lambda}(y)]-[u(x^{0})-u(y)]=-w_{\lambda}(y)\geq 0, \ \ \forall y\in H_{\lambda}.
$$
Hence we must have $w_{\lambda}(y)= 0,~~~~~~\forall~~ y\in H_{\lambda}$.

\noindent Consequently, since $w_{\lambda}(y_{\lambda})=- w_{\lambda}(y)$,
one has $w_{\lambda}(y)= 0,~~~~~~\forall~~ y\in \mathbb{R}^{N}$ as desired.
\end{proof}
We prove the following key boundary estimate lemma which will play
the role of the Hopf lemma in the second step of moving planes stated in Theorem
\ref{th3}.

\begin{theorem}\label{le1} Assume that conditions $\mathrm{(P_1)}$-$\mathrm{(P_2)}$ hold. Let $\{\lambda_{k}\}_{k\in\N}$ be a real sequence and $x^{k}\in H_{\lambda_{k}}$ such that
$\lambda_{k}\rightarrow \lambda_{0}$,
\begin{equation*}
  w_{\lambda_{k}}(x^{k})=\min_{H_{\lambda_{k}}}w_{\lambda_{k}}\leq 0,~~and~~x^{k}\rightarrow x^{0}\in T_{\lambda_{0}}.
\end{equation*}
We suppose, for $x\in H_{\lambda_{0}}$, that $w_{\lambda_{0}}(x)>0$.
Let
$$\delta_{k}=dist(x^{k}, T_{\lambda_{k}})\equiv
|\lambda_{k}- x_{1}^{k}|.
$$ Then, 
\begin{equation*}
 \overline{\lim_{\delta_{k}\rightarrow 0}}\frac{1}{\delta_{k}}\{(-\Delta)^{s}_{p(x^{k},\cdot)}u_{\lambda_{k}}(x^{k})-
 (-\Delta)^{s}_{p(x^{k},\cdot)}u(x^{k})\}<0.
\end{equation*}
\end{theorem}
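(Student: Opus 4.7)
The plan is to mirror the structure of the proof of Theorem \ref{th2b}, sharpening each step to extract a rate of order $\delta_k$. Write
\[
\Gamma(x^{k}):=(-\Delta)^{s}_{p(x^{k},\cdot)}u_{\lambda_{k}}(x^{k})-(-\Delta)^{s}_{p(x^{k},\cdot)}u(x^{k}) = J_1(x^k)+J_2(x^k),
\]
with the same splitting used in Theorem \ref{th2b} (folding the integral over $\R^N$ into two integrals over $H_{\lambda_k}$, one carrying the kernel difference $\kappa(x^k,y)$ and the other carrying the symmetric kernel). The target is to show that $\Gamma(x^k)\leq -C\delta_k$ for some $C>0$ and all sufficiently large $k$, which immediately yields $\overline{\lim}\,\Gamma(x^k)/\delta_k\leq -C<0$.

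For the $J_2$ term, I would reuse verbatim the argument from Theorem \ref{th2b}: since $x^k$ is a minimum of $w_{\lambda_k}$ on $H_{\lambda_k}$ and $w_{\lambda_k}(x^k)\leq 0$, the identity (\ref{s}) combined with $f_*'\geq 0$ and $g'(t_0)\geq 0$ (which rests on assumption $(P_2)$ and the hypothesis $u\in(0,m)$) gives $J_2(x^k)\leq 0$. No extra work is needed here; the fact that $x^k$ is near $T_{\lambda_k}$ is irrelevant for the sign.

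The heart of the proof is a quantitative lower bound $-J_1(x^k)\geq C\delta_k$. Using the hypothesis $w_{\lambda_0}>0$ on $H_{\lambda_0}$, fix a bounded set $A\subset H_{\lambda_0}$ bounded away from $T_{\lambda_0}$ and from $x^0$ on which $w_{\lambda_0}\geq c_1>0$. By continuity of $u$ and since $\lambda_k\to\lambda_0$, $x^k\to x^0$, we have $w_{\lambda_k}\geq c_1/2$ on $A$ for $k$ large, while $w_{\lambda_k}(x^k)\leq 0$. On $A$, apply the Mean Value Theorem to the integrand of $J_1$ and bound the derivative factor from below by Lemma \ref{lem1}, obtaining
\[
f(u_{\lambda_{k}}(x^{k})-u_{\lambda_{k}}(y))-f(u(x^{k})-u(y)) = f'(\xi)\bigl(w_{\lambda_k}(x^k)-w_{\lambda_k}(y)\bigr)\leq -c_2<0
\]
uniformly in $y\in A$ and large $k$. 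Finally, the $\delta_k$ factor is extracted from $\kappa(x^k,y)$: for $y\in A$, both $|x^k-y|$ and $|x^k-y_{\lambda_k}|$ lie in a compact subinterval of $(0,\infty)$, and the direct computation $|x^k-y_{\lambda_k}|^2-|x^k-y|^2=4\,\mathrm{dist}(y,T_{\lambda_k})\,\delta_k$ together with the strict monotonicity of $t\mapsto t^{N+sQ(t)}$ (assumption $(P_2)$) gives $\kappa(x^k,y)\geq c_3\delta_k$ uniformly on $A$ for large $k$. Integrating yields $J_1(x^k)\leq -C\delta_k$, which combined with $J_2(x^k)\leq 0$ concludes the proof.

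The main technical obstacle is the quantitative lower bound $\kappa(x^k,y)\geq c_3\delta_k$ uniformly on $A$. The variable exponent prevents a one-line application of the Mean Value Theorem because the two kernels carry different exponents $p(x^k,y)$ and $p(x^k,y_{\lambda_k})$; under $(P_2)$ this is handled by studying the scalar function $F(s):=s^{N+sQ(s)}$, which is continuous and strictly increasing by $(P_2)$, and comparing $F(|x^k-y|)$ to $F(|x^k-y_{\lambda_k}|)$ via a continuity modulus bounded from below on the compact $s$-range determined by $A$. A secondary but routine point is the uniform convergence of $w_{\lambda_k}\to w_{\lambda_0}$ on compact subsets, which follows from the assumed $C^{1,1}_{\mathrm{loc}}$ regularity of $u$.
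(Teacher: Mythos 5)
Your treatment of $J_2$ coincides with the paper's, but your handling of $J_1$ is genuinely different from, and in fact more careful than, the paper's own argument. The paper splits $\Gamma_k$ the same way and then argues that $\overline{\lim}_{\delta_k\to 0}J_1(x^k)<0$ by noting that $\kappa(x^k,y)\to\kappa(x^0,y)$ and asserting that $\kappa(x^0,y)>0$ by Lemma~\ref{le0}. That assertion is false here: since $x^0\in T_{\lambda_0}$, one has $|x^0-y|=|x^0-y_{\lambda_0}|$ for every $y\in H_{\lambda_0}$, hence $\kappa(x^0,y)\equiv 0$. (Lemma~\ref{le0} is stated for $x^0\in\overline{\Omega}$, but its proof rests on the strict inequality $|x^0-y|<|x^0-y_\lambda|$, which degenerates on $T_\lambda$.) Consequently $\int\kappa(x^k,y)\bigl(f(\cdot)-f(\cdot)\bigr)\,dy\to 0$, and the $1/\delta_k$ normalization produces an indeterminate form $0/0$: a quantitative rate for $\kappa(x^k,y)\to 0$ is unavoidable. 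Your proposal supplies exactly that missing ingredient. The identity $|x^k-y_{\lambda_k}|^2-|x^k-y|^2=4\,\mathrm{dist}(y,T_{\lambda_k})\,\delta_k$ is correct (with $a=x^k_1-\lambda_k$, $b=y_1-\lambda_k$ both negative one gets $(a+b)^2-(a-b)^2=4ab>0$), and combined with a Lipschitz-from-below estimate for $t\mapsto t^{N+sQ(t)}$ on a compact set bounded away from $T_{\lambda_0}$ and from $x^0$ it yields $\kappa(x^k,y)\ge c_3\delta_k$ uniformly on the test set $A$. The rest of your argument then closes the proof. In short, your route repairs an actual gap in the paper at precisely the point where you introduce the $\delta_k$ rate.

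Two details in your writeup should be made explicit. First, the ``modulus of continuity bounded from below'' for $F(t)=t^{N+sQ(t)}$ does not follow from strict monotonicity and continuity of $Q$ alone; it does follow from the factorization $F(t)=t^N\bigl(t^{Q(t)}\bigr)^s$. Since $t\mapsto t^{Q(t)}$ is increasing by $\mathrm{(P_2)}$, for $0<r<R$ one has
\[
F(R)-F(r)\ \ge\ (R^N-r^N)\bigl(r^{Q(r)}\bigr)^s\ \ge\ N\,r^{N-1}\bigl(r^{Q(r)}\bigr)^s(R-r),
\]
which is bounded below by a positive multiple of $R-r$ on any compact subinterval of $(0,\infty)$; taking reciprocals and using $R-r\ge (R^2-r^2)/(R+r)\ge c\,\delta_k$ on $A$ gives $\kappa(x^k,y)\ge c_3\delta_k$. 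Second, when invoking Lemma~\ref{lem1} to obtain $f'(\xi)\ge c_2'>0$ uniformly on $A$, you must observe that $|t_1-t_2|=|w_{\lambda_k}(x^k)-w_{\lambda_k}(y)|\ge c_1/2$ forces $\max\{|t_1|,|t_2|\}\ge c_1/4$, so that the quantity $c_0\max\{|t_1|^{p-2},|t_2|^{p-2}\}$ produced by Lemma~\ref{lem1} is indeed bounded away from zero; the lemma alone does not exclude $t_1=t_2=0$. With these two elaborations your proof is complete and, unlike the paper's, fully justified.
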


\begin{proof}
Let
\begin{eqnarray*}
\Gamma_k=\frac{1}{\delta_{k}}\{(-\Delta)^{s}_{p(x^{k},\cdot)}u_{\lambda_{k}}(x^{k})-(-\Delta)^{s}_{p(x^{k},\cdot)}u(x^{k})\}.
\end{eqnarray*}
Proceeding as in the proof of Theorem \ref{th2b}, we obtain
\begin{align*}
\Gamma_k
&= \frac{1}{\delta_{k}} \int_{H_{\lambda_{k}}}\kappa(x^k,y) \left(f(u_{\lambda_{k}}(x^{k})-u_{\lambda_{k}}(y))-f(u(x^{k})-u({y}))\right)\,dy\\
&\quad +\frac{1}{\delta_{k}}\int_{H_{\lambda_{k}}}\frac{f(u_{\lambda_{k}}(x^{k})-u_{\lambda_{k}}(y))-f(u(x^{k})-u(y))+f_{*}(u_{\lambda_{k}}(x^{k})-u(y))-f_{*}(u(x^{k})-u_{\lambda_{k}}(y))}
{|x^{k}-y_{\lambda_{k}}|^{N+sp(x^{k},y_{\lambda_{k}})}}\,dy\nonumber\\
&:= J_{1}(x^{k})+  J_{2}(x^{k}),
\end{align*}
where we have denoted
$$
\kappa(x^k,y)=\frac{1}{|x^k-y|^{N+sp(x^k,y)}}-\frac{1}{|x^k-y_{\lambda_k}|^{N+sp(x^k,y_{\lambda_k})}}.
$$
Similarly to \eqref{eq103b}, we show that 
\begin{eqnarray}\label{eqtss}
J_{2}(x^{k}) \leq 0.
\end{eqnarray}
Next, we estimate $J_{1k}$. To this end, note that
$$
\kappa(x^k,y)\to \kappa(x^0,y) \quad \text{ as } k\to\infty.
$$
Exploiting the fact that $\kappa(x^0,y)>0$ in $H_{\lambda_0}$ due to Lemma \ref{le0} and that 
$$
[u_{\lambda_{0}}(x^{0})-u_{\lambda_{0}}(y)]-[u(x^{0})-u({y})]=w_{\lambda_{0}}(x^{0})-w_{\lambda_{0}}(y)<0,
$$
together with  the monotonicity of $f$, we deduce that for all $y\in H_{\lambda_{0}}$ it holds that
\begin{align*}
f(u_{\lambda_{k}}(x^{k})-u_{\lambda_{0}}(y))-f(u(x^{k})-u({y}))&\rightarrow
f(u_{\lambda_{0}}(x^{0})-u_{\lambda_{0}}(y))-f(u(x^{0})-u({y}))
<0,
\end{align*}
as $k\rightarrow \infty$ . So, from the above pieces of information, we infer that
\begin{equation}\label{eqsss}
\overline{\lim_{\delta_{k}\rightarrow 0}}J_{1}(x^{k})<0.
\end{equation}
Consequently, combining \eqref{eqtss} and \eqref{eqsss},
we conclude that
\begin{equation*}
 \overline{\lim_{\delta_{k}\rightarrow 0}}\frac{1}{\delta_{k}}\{(-\Delta)^{s}_{p(x^{k},\cdot)}u_{\lambda_{k}}(x^{k})-
 (-\Delta)^{s}_{p(x^{k},\cdot)}u(x^{k})\}<0.
\end{equation*}
This ends the proof.
\end{proof}

\section{Radially symmetric of solutions to a fractional $p(x,\cdot)$-Laplacian problem} \label{sec4}

In this section, we work under the conditions introduced in Theorem
\ref{th2b}. More precisely, we study the following nonlinear system
involving fractional $p(x,\cdot)$-Laplacian in the unit ball:
\begin{align}\label{eq12}
\begin{cases}
        \vspace{0.1cm}&(-\Delta)^{s}_{p(x,\cdot)}u(x)=u^{q(x)}, \ \ x\in B_1(0), \\
        &u(x)=0,~~x\notin B_{1}(0),
\end{cases}
\end{align}

\begin{theorem}	\label{th3}
Suppose that $(\mathrm{P_{1})}$-$\mathrm{(P_{2})}$ hold. Moreover,
assume that $u\in C^{1,1}_{loc}\cap L_{sp(x,\cdot)}$ and $u\in (0, 1)$ is a solution of \eqref{eq12} with $q\in C(B_1(0), (1,\infty))$. Then $u$ must be radially symmetric and monotone decreasing about the origin.
\end{theorem}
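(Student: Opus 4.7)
The plan is to apply the method of moving planes along an arbitrary direction of $\mathbb{R}^N$; once symmetry is obtained about every hyperplane through the origin, rotational invariance yields radial symmetry, with strict monotonicity dropping out as a byproduct of the strict comparison. Without loss of generality fix the direction $e_1$ and adopt the notation $T_\lambda$, $H_\lambda$, $\Omega_\lambda=H_\lambda\cap B_1(0)$, $x_\lambda$, and $w_\lambda=u_\lambda-u$ from Section~\ref{sec3}. The decisive preliminary observation is that, by $\mathrm{(P_2)}$, one has $p(x,z_\lambda)=Q(|x-z_\lambda|)=Q(|x_\lambda-z|)=p(x_\lambda,z)$, so the change of variables $y=z_\lambda$ in the definition of the operator yields $(-\Delta)^{s}_{p(x,\cdot)}u_\lambda(x)=(-\Delta)^{s}_{p(x_\lambda,\cdot)}u(x_\lambda)=u(x_\lambda)^{q(x_\lambda)}$, and hence the reflected identity
\begin{equation*}
(-\Delta)^{s}_{p(x,\cdot)}u_\lambda(x)-(-\Delta)^{s}_{p(x,\cdot)}u(x)\;=\;u(x_\lambda)^{q(x_\lambda)}-u(x)^{q(x)},\qquad x\in B_1(0).
\end{equation*}
This is the precise expression that plugs into Theorems~\ref{th2b} and~\ref{le1}; the bound $u\in(0,1)$, together with $q>1$ and the continuity of $q$, is what will furnish the correct sign at the critical points, through the monotonicity of $t\mapsto t^{q}$ for $t\in(0,1)$ combined with control of the exponent discrepancy $q(x_\lambda)-q(x)$.

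First I would execute the initiation step: for $\lambda$ close to $-1$ the region $\Omega_\lambda$ is narrow, the boundary condition $u\equiv 0$ outside $B_1(0)$ gives $w_\lambda\geq 0$ on $H_\lambda\setminus B_1(0)$ for free, and Theorem~\ref{th2b} then propagates the inequality into $\Omega_\lambda$ after checking at any putative interior negative minimum $x^0$ that the reflected identity above is compatible with the $\geq 0$ hypothesis of that theorem. Set
\begin{equation*}
\lambda_0\;=\;\sup\bigl\{\lambda\in(-1,0]:\ w_\mu\geq 0 \text{ in } H_\mu \text{ for every } \mu\in(-1,\lambda]\bigr\},
\end{equation*}
and aim to show $\lambda_0=0$. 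Assume for contradiction $\lambda_0<0$; pick $\lambda_k\downarrow\lambda_0$ and $x^k\in H_{\lambda_k}$ with $w_{\lambda_k}(x^k)=\min_{H_{\lambda_k}} w_{\lambda_k}<0$; by compactness $x^k\to x^0\in\overline{H_{\lambda_0}}$. If $x^0$ is interior to $H_{\lambda_0}$, continuity and $w_{\lambda_0}\geq 0$ force $w_{\lambda_0}(x^0)=0$, and the second clause of Theorem~\ref{th2b} yields $w_{\lambda_0}\equiv 0$ a.e.\ in $\mathbb{R}^N$, which is impossible for $\lambda_0<0$ because the reflected ball differs from $B_1(0)$ on a set of positive measure on which $u$ and $u_{\lambda_0}$ carry incompatible boundary data. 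The delicate case is $x^0\in T_{\lambda_0}$, where I apply Theorem~\ref{le1}: it supplies the strict negative upper limit of $\delta_k^{-1}\bigl((-\Delta)^{s}_{p(x^k,\cdot)}u_{\lambda_k}(x^k)-(-\Delta)^{s}_{p(x^k,\cdot)}u(x^k)\bigr)$, while the reflected identity rewrites the bracket as $u(x^k_{\lambda_k})^{q(x^k_{\lambda_k})}-u(x^k)^{q(x^k)}$; a first-order estimate of this quantity, using $w_{\lambda_k}(x^k)\to 0$, $q(x^k_{\lambda_k})-q(x^k)=O(\delta_k)$ and the smoothness of $u$, shows its $\delta_k^{-1}$-scaled limit cannot be strictly negative, which is the contradiction.

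Once $\lambda_0=0$ is in hand, $w_0\geq 0$ in $H_0$ reads $u(-x_1,x')\geq u(x_1,x')$ for $x_1<0$; running the argument along the reversed orientation gives the opposite inequality and hence symmetry of $u$ about $T_0$. Arbitrariness of $e_1$ then yields radial symmetry. For the monotone radial decrease, for every $\lambda\in(-1,0)$ the strict alternative in Theorem~\ref{th2b} rules out $w_\lambda\equiv 0$ (again by the exterior condition), so $w_\lambda>0$ in $H_\lambda\cap B_1(0)$, which encodes strict monotone decrease in the radial direction. The main obstacle I foresee is the sign analysis at the negative interior minimum: the variable exponent $q$ blocks the clean scalar reduction $u_\lambda^{q}-u^{q}=q\xi^{q-1}w_\lambda$ standard in the constant-exponent theory, and I expect to replace it by a two-parameter mean-value argument combined with Lemma~\ref{lem1} to absorb $q(x_\lambda)-q(x)$ into a controllable remainder; the geometric fact, produced by the moving planes procedure itself, that at the critical points the pair $(x,x_\lambda)$ is close to $T_\lambda$ and therefore the discrepancy $|q(x_\lambda)-q(x)|$ is small, is exactly what makes this feasible.
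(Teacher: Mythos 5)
Your overall scaffolding — reflect, set up $w_\lambda$, run moving planes with Theorem~\ref{th2b} in the interior and Theorem~\ref{le1} at the boundary, take $\lambda_0 = \sup$, and conclude symmetry by running both orientations along an arbitrary direction — is the same as the paper's, and you also make the reflection identity $(-\Delta)^s_{p(x,\cdot)}u_\lambda(x)=(-\Delta)^s_{p(x_\lambda,\cdot)}u(x_\lambda)$ explicit via $\mathrm{(P_2)}$, which the paper leaves tacit. You are also right to flag the variable exponent $q$ as a genuine wrinkle: the paper writes the linearization as $q(x)\xi_\lambda^{q(x)-1}w_\lambda$ as if $q$ were unchanged under reflection, which is only exact when $q(x_\lambda)=q(x)$. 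Your note that $|q(x_\lambda)-q(x)|$ is small near $T_\lambda$ and can be absorbed as a remainder is a reasonable patch.

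The genuine gap is in Step~1. You propose to "apply Theorem~\ref{th2b} to propagate the inequality" after "checking at any putative interior negative minimum that the reflected identity is compatible with the $\geq0$ hypothesis." This cannot work as stated. Theorem~\ref{th2b} needs $(-\Delta)^s_{p(x,\cdot)}u_\lambda(x)-(-\Delta)^s_{p(x,\cdot)}u(x)\geq 0$ throughout $B_1(0)$ as a hypothesis, but by the reflected identity this equals $u(x_\lambda)^{q(x_\lambda)}-u(x)^{q(x)}$, and precisely where $w_\lambda<0$ this quantity is negative, not nonnegative. There is no way to verify the hypothesis a priori: the sign of the right-hand side is exactly the unknown. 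What the paper does instead is argue directly at a hypothetical negative minimum $x^0\in\Omega_\lambda$ by running the $J_1+J_2$ decomposition from the proof of Theorem~\ref{th2b} and, crucially, deriving the quantitative lower bound
\begin{equation*}
J_{21}=\int_{H_\lambda}\frac{f_*'(t_1)+f_*'(t_2)}{|x^0-y_\lambda|^{N+sp(x^0,y_\lambda)}}\,dy\;\geq\;C\,\frac{\min\{|u(x^0)|^{p^+-2},|u(x^0)|^{p^--2}\}}{\delta^{sp^-}},
\end{equation*}
obtained by restricting the integral to $\Omega_{\lambda+1}\setminus\Omega_\lambda$, where $u(y)=0$ and Lemma~\ref{lem1} applies. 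This blows up as $\delta=\lambda+1\to0$, so the term $w_\lambda(x^0)J_{21}$ overwhelms $q(x^0)u(x^0)^{q(x^0)-1}w_\lambda(x^0)$ and produces a strict contradiction with the equation. Without an estimate of this kind — which is the quantitative content of "$\Omega_\lambda$ is narrow" — the initiation step does not close; your proposal mentions the narrowness but never converts it into a usable bound, and the direct appeal to Theorem~\ref{th2b} is circular. Step~2 in your proposal is fine and matches the paper (Theorem~\ref{th2b} for strict positivity at $\lambda_0$, the Taylor argument $w_{\lambda_k}(x^k)/\delta_k\to0$, and Theorem~\ref{le1} for the contradiction), up to the same $q$-variability caveat you already identified.
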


\begin{proof}
Let $H_{\lambda}$, $u_{\lambda}(x)$ and $w_{\lambda}(x)$ be defined as in Section $3$. Denote 
$$
\Omega_{\lambda}=H_{\lambda}\cap B_{1}(0).
$$
Then in $\Omega_{\lambda}$, we have
\begin{equation}\label{eq2c}
(-\Delta)^{s}_{p(x, \cdot)}u_{\lambda}(x)- (-\Delta)^{s}_{p(x, \cdot)}u(x)=q(x)\xi^{q(x)-1}_{\lambda}(x)w_{\lambda}(x),
\end{equation}
where $\xi_{\lambda}(x)$ is a value between $u_{\lambda}(x)$ and $u(x)$.
Thus at any point $x\in \Omega_{\lambda}$ where $w_{\lambda}(x)\leq 0$, we have
\begin{equation}\label{eq31}
 (-\Delta)^{s}_{p(x, \cdot)}u_{\lambda}(x)- (-\Delta)^{s}_{p(x, \cdot)}u(x) \geq q(x) u^{q(x)-1}(x)w_{\lambda}(x).
\end{equation}
We divide the proof into two steps.

\medskip

\noindent \textbf{Step 1}. In this step, we show that for $\lambda$ sufficiently closed to -1, it holds
\begin{eqnarray}\label{eq4c}
w_{\lambda}(x)\geq 0, \forall~~x\in \Omega_{\lambda}.
\end{eqnarray}
Otherwise, there exists $x^{0}\in \Omega_{\lambda}$, such that
$$
w_{\lambda}(x^{0})=\min_{\Omega_{\lambda}} w_{\lambda}=\min_{H_{\lambda}} w_{\lambda}<0.
$$
Denote
$$
\Gamma(x^{0})=(-\Delta)^{s}_{p(x^{0}, \cdot)}u_{\lambda}(x^{0})-(-\Delta)^{s}_{p(x^{0}, \cdot)}u(x^{0}).
$$
Then, similar to the proof of Theorem \ref{th2b}, we obtain
\begin{align*}
\Gamma(x^{0})&=\int_{H_{\lambda}}\kappa(x^{0},y) \Big(f(u_{\lambda}(x^{0})-u_{\lambda}(y))-f(u(x^{0})-u({y}))\Big)\,dy\nonumber\\
&\quad+\int_{H_{\lambda}}\frac{f(u_{\lambda}(x^{0})-u_{\lambda}(y))-f(u(x^{0})-u(y))+f_{*}(u_{\lambda}(x^{0})-u(y))-f_{*}(u(x^{0})-u_{\lambda}(y))}
{|x^{0}-y_{\lambda}|^{N+sp(x^{0},y_{\lambda})}}\,dy\nonumber\\
& :=   J_{1}(x^{0})+  J_{2}(x^{0}),
\end{align*}
Proceeding as in \eqref{eq9b}, we infer that
\begin{eqnarray}\label{eq33}
 J_{1}(x^{0})<0.
 \end{eqnarray}
Next, by \eqref{s}, we get
\begin{align*}
J_{2}(x^{0})=\int_{H_{\lambda}}\frac{  \Big(w_{\lambda}(x^{0})-w_{\lambda}(y)\Big) g'(t_{0})}
{|x^{0}-y_{\lambda}|^{N+sp(x^{0},y_{\lambda})}}\,dy + w_{\lambda}(x^{0})
\int_{H_{\lambda}}\frac{f_{*}'(t_{1})+f_{*}'(t_{2}) }
{|x^{0}-y_{\lambda}|^{N+sp(x^{0},y_{\lambda})}}\,dy,
  \end{align*}
and 
 \begin{align}\label{eq34}
 (-\Delta)^{s}_{p(x^0,\cdot)}u_{\lambda}(x^{0})-(-\Delta)^{s}_{p(x^0,\cdot)}u(x^{0})\leq  w_{\lambda}(x^{0}) J_{21},
\end{align}
where $$
J_{21}=\int_{H_{\lambda}}\frac{f_{*}'(t_{1})+f_{*}'(t_{2}) }
{|x^{0}-y_{\lambda}|^{N+sp(x^{0},y_{\lambda})}}\,dy.
$$
Now, we estimate $J_{21}$ to derive a contradiction where $\lambda$ is sufficiently close to -1.
Let $D=H_{\lambda}\backslash\Omega_{\lambda}$. Noting that $u(y)=0$ in $D$ and exploiting Lemma \ref{lem1}, we deduce that
\begin{align}\label{eq6c}
  J_{21}&\geq \int_{H_{\lambda}}\frac{(p(x^{0},y_{\lambda})-1)|t_2|^{p(x^{0},y_{\lambda})-2}}{|x^{0}-y_{\lambda}|^{N+sp(x^{0},y_{\lambda})}}\,dy\nonumber\\
&\geq\int_{H_{\lambda}}\frac{(p(x^{0},y_{\lambda})-1)c_{0}|u(x^{0})-u(y)|^{p(x^{0},y_{\lambda})-2}}{|x^{0}-y_{\lambda}|^{N+sp(x^{0},y_{\lambda})}}\,dy\nonumber\\
&\geq c_{0}(p^--1)\int_{D}\frac{|u(x^{0})|^{p(x^{0},y_{\lambda})-2}}{|x^{0}-y_{\lambda}|^{N+sp(x^{0},y_{\lambda})}}\,dy\nonumber\\
&\geq c_{0}(p^--1) \int_{\Omega_{\lambda+1}\setminus
\Omega_{\lambda}}\frac{|u(x^{0})|^{p(x^{0},y_{\lambda})-2}}{|x^{0}-y_{\lambda}|^{N+sp(x^{0},y_{\lambda})}}\,dy\nonumber
\\ &\geq C\frac{\min\{|u(x^{0})|^{p^+-2},|u(x^{0})|^{p^--2}\}}{\delta^{sp^-}},
\end{align}
where $\delta=\lambda+1$ is the width of the region $\Omega_{\lambda}$ in the $x_{1}$-direction. Hence, by \eqref{eq34} and \eqref{eq6c}, we obtain
\begin{align*}
&(-\Delta)^{s}_{p(x^{0}, \cdot)}u_{\lambda}(x^{0})-(-\Delta)^{s}_{p(x^{0}, \cdot)}u(x^{0})-q(x^0)u^{q(x^{0})-1}(x^{0})w_{\lambda}(x^{0})\\ & \leq  C w_{\lambda}(x^{0})\frac{\min(|u(x^{0})|^{p^+-2},|u(x^{0})|^{p^--2})}{ \delta^{sp^-}} - q(x^{0})u^{q(x^{0})-1}(x^{0})w_{\lambda}(x^{0}).
\end{align*}
Therefore, since  $w_{\lambda}(x^{0})<0$, we can observe that
\begin{align*}
&(-\Delta)^{s}_{p(x^{0}, \cdot)}u_{\lambda}(x^{0})-(-\Delta)^{s}_{p(x^{0}, \cdot)}u(x^{0})-q(x^{0})u^{q(x^{0})-1} (x^{0})w_{\lambda}(x^{0})\\
& \leq   w_{\lambda}(x^{0})\Big[C \frac{\min(|u(x^{0})|^{p^+-2},|u(x^{0})|^{p^--2})}{ \delta^{sp^-}}- q(x^{0})u^{q(x^{0})-1}(x^{0})\Big]<0,
\end{align*}
when $\delta$ is sufficiently small.
 This contradicts \eqref{eq31}, then \begin{eqnarray*}
 w_{\lambda}(x)\geq 0, \forall~~x\in \Omega_{\lambda},
\end{eqnarray*}
for $\lambda$ sufficiently close to -1.

\medskip

\noindent \textbf{Step 2:}\\
 Inequality \eqref{eq4c} provides a starting point to move the plane $T_{\lambda}$ to the right as long as \eqref{eq4c} holds to its limiting position. More precisely,
 denote
$$
\lambda_{0}=\sup\{\lambda\leq 0 \text{ such that } w_{\mu}(x)\geq 0,~~ x\in \Omega_{\mu},~~ \mu\leq \lambda\}.
$$
Next, we want to show that 
$$
\lambda_0=0.
$$
We argue by contradiction and suppose that $\lambda_0<0$. Then,
using \eqref{eq2c} and the strong maximum principle stated in Theorem \ref{th2b}, we deduce
that
\begin{equation}\label{eq14c}
w_{\lambda_{0}}(x)> 0,~~x\in \Omega_{\lambda_{0}},
\end{equation}
On the other hand, by the definition of supremum, there exists a
sequence $\lambda_{k}$  such that
$$\lambda_k\leq \lambda_{k-1}, \ \ \lambda_k\leq 0 \ \ \mbox{and} \ \ \lambda_k\rightarrow \lambda_0.$$
Again, by the definition of supremum, there exist $x_k\in
\Omega_{\lambda_{k}}$ such that
\begin{eqnarray}\label{eq15c}
  w_{\lambda_{k}}(x^{k})=\min_{\Omega_{\lambda_{k}}}w_{\lambda_{k}}<0,~~and~~\nabla
  w_{\lambda_{k}}(x^{k})=0.
\end{eqnarray}
 We may assume further, up to a subsequence, that
$$
x^k\rightarrow x^0, \ \  w_{\lambda_{0}}(x^0)\leq 0,$$
which owing to \eqref{eq14c}  implies $x^0
\in T_{\lambda_{0}}.$
 It follows, in view of  \eqref{eq15c}, the continuity of
$w_{\lambda}$ and its derivative with respect to both
x and $\lambda$, that
\begin{equation}\label{eq17c}
   \nabla w_{\lambda_{0}}(x^{0})=0.
\end{equation}
Further, setting
$\delta_k:=\mbox{dist}(x^k,T_{\lambda_{k}})=|\lambda_k-x_{1}^{k}|=|x^k-z^k|$
for some $z^k\in T_{\lambda_{k}}$. Recall that, from \eqref{eq2c},
we have
\begin{equation}\label{eqan1}
\frac{1}{\delta_{k}}\{(-\Delta)^{s}_{p(x^{k},\cdot)}u_{\lambda_{k}}(x^{k})-(-\Delta)^{s}_{p(x^{k},\cdot)}u(x^{k})\}=\frac{q(x^k)}{\delta_{k}}\xi^{q(x^k)-1}_{\lambda_k}(x)w_{\lambda_k}(x^k).
\end{equation}
Notice that $w_{\lambda_{k}}=0$ on
$T_{\lambda_{k}}$, so
\begin{align}\label{eqan2}
\begin{split}
\displaystyle \lim_{k\rightarrow
\infty}\frac{w_{\lambda_{k}}(x^{k})}{|x^k-z^k|}&=\lim_{k\rightarrow
\infty}\frac{w_{\lambda_{k}}(x^{k})-w_{\lambda_{k}}(z^{k})}{|x^k-z^k|}\\&=\lim_{k\rightarrow
\infty}\frac{\nabla
w_{\lambda_{k}}(x^k)\cdot(z^k-x^k)+o(|z^k-x^k|)}{|z^k-x^k|}=0.
\end{split}
\end{align} Consequently, combining \eqref{eqan1} and
\eqref{eqan2}, we obtain
\begin{equation}\label{eq18c}
 \lim_{\delta_{k}\rightarrow 0}\frac{1}{\delta_{k}}\{(-\Delta)^{s}_{p(x^{k},\cdot)}u_{\lambda_{k}}(x^{k})-(-\Delta)^{s}_{p(x^{k},\cdot)}u(x^{k})\}=0.
\end{equation}
In the last lines, we used that $q(x^k)\xi^{q(x^k)-1}_{\lambda_k}(x)w_{\lambda_k}(x^k)$ is a
bounded sequence. 
Therefore, \eqref{eq18c} gives a contradiction with
Theorem \ref{le1}. Therefore $\lambda_0=0$.\\ Since $x_{1}$ direction
can be chosen arbitrarily, we conclude that $u$ and $v$ must be
radially symmetry and monotone decreasing about the $B_{1}(0)$.
\end{proof}

Finally, using  the previous results, we can establish the symmetry of positive
solutions under natural assumptions on the right hand side $f$ concerning the whole space.
\begin{theorem} \label{th4}
Assume $(\mathrm{P_{1})}$-$\mathrm{(P_{2})}$ and let $u\in C^{1,1}_{loc}\cap L_{sp(x,\cdot)}$ such that $u\in (0, 1)$ be a solution of 
$$
(-\Delta)^s_{p(x,\cdot)} u(x)= f(u(x)), \quad x\text{ in } \R^N.
$$
Assume that
\begin{equation} \label{cond.decay2}
f'(t)\leq 0 \quad \text{for } t\leq 1,
\end{equation}
\begin{equation} \label{cond.decay}
\lim_{|x|\to\infty} u(x) =0.
\end{equation}
Then $u$ is radially symmetric around some point in $\R^N$.
\end{theorem}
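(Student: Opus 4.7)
The plan is to apply the moving planes method in an arbitrary direction of $\R^N$ and then use the rotational invariance of the hypotheses. Fix a unit vector and, without loss of generality, take it to be $e_1$; retain the notation $T_\lambda$, $H_\lambda$, $x_\lambda$, $u_\lambda$, $w_\lambda = u_\lambda - u$ from Section \ref{sec3}. A preliminary observation: at any $x \in H_\lambda$ where $w_\lambda(x) < 0$, the equation together with the mean value theorem and \eqref{cond.decay2} gives
\[
\Gamma(x) := (-\Delta)^s_{p(x,\cdot)} u_\lambda(x) - (-\Delta)^s_{p(x,\cdot)} u(x) = f(u_\lambda(x)) - f(u(x)) = f'(\xi)\, w_\lambda(x) \ge 0,
\]
with $\xi \in (u_\lambda(x), u(x)) \subset (0,1)$. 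Moreover, \eqref{cond.decay} ensures $w_\lambda(x) \to 0$ as $|x|\to\infty$ in $\overline{H_\lambda}$, and $w_\lambda \equiv 0$ on $T_\lambda$, so any strictly negative infimum of $w_\lambda$ in $H_\lambda$ is attained at an interior point.

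The first step is to show that $w_\lambda \ge 0$ throughout $H_\lambda$ whenever $\lambda$ is sufficiently negative. Arguing by contradiction at a negative interior minimum $x^0 \in H_\lambda$, the splitting $\Gamma(x^0) = J_1(x^0) + J_2(x^0)$ from the proof of Theorem \ref{th2b} yields $J_1(x^0) < 0$ (since $\kappa(x^0,\cdot) > 0$ by Lemma \ref{le0} and the integrand is nonpositive and not identically zero thanks to the decay), while $J_2(x^0) \le w_\lambda(x^0)\, J_{21}$ with $J_{21} > 0$ an $f_*'$-type integral bounded below, via Lemma \ref{lem1}, by the same mechanism used in Step~1 of Theorem \ref{th3}. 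The decay \eqref{cond.decay} drives this lower bound to $+\infty$ as $\lambda \to -\infty$, overwhelming the finite quantity $-f'(\xi)$ and contradicting $\Gamma(x^0) \ge f'(\xi) w_\lambda(x^0)$. Next, set
\[
\lambda_0 := \sup\bigl\{\lambda \in \R : w_\mu \ge 0 \ \text{in } H_\mu \ \text{for every } \mu \le \lambda\bigr\}.
\]
Step~1 guarantees $\lambda_0 > -\infty$, and $\lambda_0 < +\infty$ follows because otherwise the bound $u(x) \le u(x_\mu)$ for every $\mu$, combined with \eqref{cond.decay}, would force $u(x) \le 0$ upon letting $\mu \to +\infty$, contradicting $u > 0$.

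The crucial step is to show that $w_{\lambda_0} \equiv 0$ in $H_{\lambda_0}$. Assuming the contrary, an application of the strong maximum principle in the spirit of Theorems \ref{th1b}--\ref{th2b} upgrades $w_{\lambda_0} \ge 0$ to $w_{\lambda_0} > 0$ in $H_{\lambda_0}$. By the definition of supremum there exist $\lambda_k \searrow \lambda_0$ and interior negative minima $x^k \in H_{\lambda_k}$ of $w_{\lambda_k}$; along a subsequence $x^k \to x^0 \in \overline{H_{\lambda_0}}$, and the strict positivity of $w_{\lambda_0}$ in $H_{\lambda_0}$ together with $w_{\lambda_0}|_{T_{\lambda_0}} \equiv 0$ forces $x^0 \in T_{\lambda_0}$. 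Setting $\delta_k := \operatorname{dist}(x^k, T_{\lambda_k})$, Theorem \ref{le1} provides $\limsup_{k\to\infty} \Gamma(x^k)/\delta_k < 0$, whereas the equation combined with $\nabla w_{\lambda_k}(x^k) = 0$ and $w_{\lambda_k} \equiv 0$ on $T_{\lambda_k}$ forces $\Gamma(x^k)/\delta_k \to 0$ exactly as in \eqref{eqan2}--\eqref{eq18c}. This contradiction proves $w_{\lambda_0} \equiv 0$, so $u$ is symmetric about $T_{\lambda_0}$. Since $e_1$ was arbitrary, $u$ admits a hyperplane of symmetry perpendicular to every direction in $\R^N$, which forces radial symmetry about a single point $x_* \in \R^N$.

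The main technical obstacle I anticipate is the quantitative part of Step~1. In the ball case of Theorem \ref{th3} the vanishing of $u$ outside $B_1(0)$ and the geometry of the annulus $\Omega_{\lambda+1} \setminus \Omega_\lambda$ immediately produced the sharp lower bound $J_{21} \gtrsim \delta^{-sp^-}$, but here only the decay \eqref{cond.decay} is available, so the analogous blow-up of $J_{21}$ as $\lambda \to -\infty$ must be extracted from the sublevel set $\{y \in H_\lambda : u(y) \le \varepsilon\}$ via a careful use of Lemma \ref{lem1}, and the sign condition $f' \le 0$ on $(0,1]$ must be deployed quantitatively to absorb the reaction term.
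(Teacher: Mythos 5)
Your overall structure matches the paper's: moving planes in an arbitrary direction, a Step~1 establishing $w_\lambda \ge 0$ for $\lambda$ very negative, the supremum $\lambda_0$, and Step~2 using Theorem~\ref{le1} as the Hopf-type lemma together with \eqref{eqan1}--\eqref{eq18c} to force $w_{\lambda_0}\equiv 0$, followed by rotation invariance. Your explicit verification that $\lambda_0<+\infty$ (via the decay condition and the fact that $w_\mu\ge 0$ for all $\mu$ would force $u\le 0$) is a point the paper passes over by simply restricting to $\lambda\le 0$.

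However, the ``main technical obstacle'' you anticipate in Step~1 is not present. You do not need any quantitative blow-up of $J_{21}$ as $\lambda\to-\infty$. Because the theorem assumes $u\in(0,1)$ and $f'(t)\le 0$ for $t\le 1$, the mean-value point $\xi$ between $u_\lambda(x^0)$ and $u(x^0)$ always lies in $(0,1)$, so $f'(\xi)\le 0$. At a negative interior minimum $x^0$ of $w_\lambda$ (which is attained thanks to \eqref{cond.decay}), this gives
\[
\Gamma(x^0)=f'(\xi)\,w_\lambda(x^0)\ge 0
\]
directly, since both factors are nonpositive: the reaction term already has the favorable sign. On the other hand, exactly as in \eqref{ineqq}, the minimum property of $x^0$ gives $\Gamma(x^0)=J_1(x^0)+J_2(x^0)<0$. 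These two inequalities contradict one another with no further estimate. The quantitative bound $J_{21}\gtrsim \delta^{-sp^-}$ from \eqref{eq6c} is only needed in Theorem~\ref{th3}, where the reaction term $q(x^0)u^{q(x^0)-1}(x^0)\,w_\lambda(x^0)$ is strictly negative and must be overwhelmed; here it is superfluous and, as you yourself note, would be delicate to extract without the ball geometry. So your proposal reaches the correct conclusion, but by a more laborious and less robust route than the sign argument the paper actually uses in Step~1.
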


\begin{proof}
Let $H_{\lambda}$, $u_{\lambda}(x)$ and $w_{\lambda}(x)$ be defined as in Section \ref{sec3}. 

\vspace{0.1cm}
We split the proof in two steps in order to apply the moving planes method.

\medskip

\noindent \textbf{Step 1}. In this step, we show that for $\lambda$ sufficiently negative, it holds
\begin{eqnarray}\label{eq4c}
w_{\lambda}(x)\geq 0, \forall~~x\in H_{\lambda}.
\end{eqnarray}
Due to the decay condition \eqref{cond.decay}  on $u$, there exists $x^0 \in H_\lambda$ such that $w_\lambda(x^0) \min_{H_\lambda} w_\lambda <0$.

\noindent Moreover, using the equation we have that
\begin{equation}\label{eq2c*}
(-\Delta)^{s}_{p(x, \cdot)}u_{\lambda}(x)- (-\Delta)^{s}_{p(x, \cdot)}u(x)
=
f(u_\lambda(x))-f(u(x))=f'(\xi) w_\lambda(x),
\end{equation}
where $\xi_{\lambda}(x)$ is a value between $u_{\lambda}(x)$ and $u(x)$. In particular, we have
$$
u_\lambda(x^0) \leq \xi (x^0) \leq u(x^0).
$$
The decay assumption on $u$ gives that for $\lambda$ negative enough, $u(x^0)$ is small and then $\xi(x^0)$ is small, giving that $f'(\xi(\bar x))\leq 0$ due to \eqref{cond.decay2}. As a consequence, 
$$
(-\Delta)^{s}_{p(x, \cdot)}u_{\lambda}(x)- (-\Delta)^{s}_{p(x, \cdot)}u(x) = f(u_\lambda(x))- f(u(x)) \geq 0.
$$
However, as seen in \eqref{ineqq}, under these conditions we have that
$$
(-\Delta)^{s}_{p(x, \cdot)}u_{\lambda}(x)- (-\Delta)^{s}_{p(x, \cdot)}u(x) = f(u_\lambda(x))- f(u(x)) <0,
$$
which is a contradiction. Therefore $w_\lambda(x)\geq 0$ for all $x\in \Sigma_\lambda$ for $\lambda$ sufficiently negative.
 
\medskip

\noindent \textbf{Step 2}. Finally, if we define the quantity
$$
\lambda_{0}=\sup\{\lambda\leq 0 \text{ such that } w_{\mu}(x)\geq 0,~~ x\in H_{\mu},~~ \mu\leq \lambda\},
$$
using condition \eqref{cond.decay} and \eqref{cond.decay2}  we can proceed analogously as in Step 2 of Theorem \ref{th3} to conclude that $u$ is symmetric about the limiting plane $T_{\lambda_0}$ or $w_{\lambda_0}(x)=0$  for any $x\in H_{\lambda_0}$, which concludes the proof.
\end{proof}

\subsection*{Concluding remarks and open problems:}

We summarize some open problems that arise from our work as follows:
\begin{enumerate}
\item[(i)] Condition $\mathrm{(P_2)}$  plays a key role in the proof of the maximum principle for anti-symmetric functions stated in Theorem
\ref{th2b}.  Note that, Theorem \ref{th2b} is the basic tool in proving  Theorems \ref{le1} and \ref{th3}. We do not have any
knowledge about the proof of Theorem \ref{th2b} without this condition.

\medskip

\item[(ii)] We leave as an open question to find which are the (best) conditions on $f$ and on the decay of $u$ at infinity in order to ensure symmetry of positive solutions $u\in C^{1,1}_{loc}\cap L_{sp(x,\cdot)}$ of
$$
(-\Delta)^s_{p(x,\cdot)} u(x)= f(u(x)) \quad \text{ in } \R^N.
$$
in the case in which $f$ is an increasing function.

\medskip

\item[(iii)] Further interesting research directions would be to address qualitative properties of solutions unbounded domains, for instance
$$
(-\Delta)^s_{p(x,\cdot)} u(x)=f(u(x)) \quad \text{ in } \{x_N>0\} \quad \text{ and } u=0 \text{ in }\{x_N=0\},
$$
or more general unbounded domains such as those given by the epigraph of a Lipschitz function.

\medskip

\item[(iv)] We believe that a valuable research direction is to generalize the abstract approach developed in this paper
to the mixed local and nonlocal case of type as in \cite{val}.
\end{enumerate}

\end{document}